\newcommand{\mub} {\ensuremath{\mu}}
\newcommand{\muonline} {\ensuremath{\breve\mub}}
\newcommand{\proj} {\ensuremath{P}}
\newcommand{\nt} {\ensuremath{n_t}}
\newcommand{\nin} {\ensuremath{n_i}}
\newcommand{\rk}{\ensuremath{\res^{(k)}}}
\newcommand{\tilderk}{\ensuremath{\tilde \res^{(k)}}}
\newcommand{\jk}{\ensuremath{\jac^{(k)}}}
\newcommand{\tildejk}{\ensuremath{\widetilde  {\jac^{(k)}\Phi_\state}}}
\newcommand{\pk}{\ensuremath{s^{(k)}}}
\newcommand{\oneton}{\ensuremath{\{1,\ldots,N\}}}
\newcommand{\RR}[1]{\ensuremath{\mathbb{\res}^{ #1 }}}
\newcommand{\Span}[1]{\mathrm{span}\left(#1\right)}
\newcommand{\Range}[1]{\mathrm{range}\left(#1\right)}
\newcommand{\vecmat}[2]{\left[#1^1 \ \cdots\ #1^{#2}\right]}
\newcommand{\sampleindices}{\ensuremath{\mathcal I}}
\newenvironment{proof}{\begin{trivlist}
\item {\emph {Proof}:\:}}
{$\square$\end{trivlist}}
\newenvironment{definition}[1]{\begin{trivlist}
\item {{\emph {#1}}:\:}}
{\end{trivlist}}
\newcommand{\state} {\ensuremath{w}}
\newcommand{\res} {\ensuremath{R}}
\newcommand{\jac} {\ensuremath{J}}
\newcommand{\snap} {\ensuremath{W}}
\newcommand{\params} {\ensuremath{\mub}}
\newcommand{\paramtrain} {\ensuremath{\mub^{\mathrm{train}}}}
\newcommand{\nsnap} {\ensuremath{n_\snap}}
\newcommand{\statecoords} {\ensuremath{\state_r}}
\newcommand{\nstate} {\ensuremath{n_\state}}
\newcommand{\podstate} {\ensuremath{\Phi_\state}}
\newcommand{\podres} {\ensuremath{\Phi_\res}}
\newcommand{\podjac} {\ensuremath{\Phi_\jac}}
\newcommand{\nR}[0] {\ensuremath{n_R}}
\newcommand{\nJ}[0] {\ensuremath{n_J}}
\newcommand{\nnode}[0] {\ensuremath{n_{{s}}}}
\newcommand{\nbasisgreed}[0] {\ensuremath{n_{{c}}}}
\newcommand{\nbasisgreedpit}[0] {\ensuremath{n_{{ci}}}}
\newcommand{\nbasisgreedpitmin}[0] {\ensuremath{n_{{ci,\mathrm{min}}}}}
\mathchardef\mhyphen="2D
\newcommand{\Qtot}[0] {\ensuremath{n_b}}
\newcommand{\nrhsmax}[0] {\ensuremath{n_{\mathrm{RHS}}}}
\newcommand{\Rapprox}{\ensuremath{\tilde R}}	
\newcommand{\staten}{\ensuremath{\state^n}}	
\newcommand{\tn}{\ensuremath{t^n}}	
\newcommand{\statenp}{\ensuremath{\state^{n+1}}}	
\newcommand{\stateApprox}{\ensuremath{\tilde \state}}	
\newcommand{\stateApproxnp}{\ensuremath{\tilde \state^{n+1}}}	
\newcommand{\stateApproxn}{\ensuremath{\tilde \state^{n}}}	
\newcommand{\Lg}{\ensuremath{\mathcal L_{G}}}	
\newcommand{\Lgn}{\ensuremath{\mathcal L_{G}^n}}	
\mathchardef\mhyphen="2D
\newcommand{\neqpernode}{\ensuremath{\nu}}	
\newcommand{\deltaStatecoords} {\ensuremath{\state_r}}
\newcommand{\Dofs}{\ensuremath{ \delta}}	
\newcommand{\dofs}[1]{\ensuremath{ \Dofs(#1)}}	
\newcommand{\nnodeadd}{\ensuremath{n_{{a}}}}	
\newcommand{\nnodeaddpit}{\ensuremath{n_{{ai}}}}     
\newcommand{\nnodeaddpitmin}{\ensuremath{n_{{ai,\mathrm{min}}}}}     
\newcommand{\nmaskout}{\ensuremath{n_{\mathsf k}}}	
\newcommand{\nmask}{\ensuremath{n_{\mathsf j}}}	
\newcommand{\pnk}{\ensuremath{s^{n+1(k)}}}	
\newcommand{\stationarySet}{\ensuremath{\mathcal M}}	
\newcommand{\stationaryTs}[1]{\ensuremath{\mathcal M^{#1}}}	
\newcommand{\stationparams}{\ensuremath{\stationarySet^n(\params)}}	
\newcommand{\levelset}{\ensuremath{\mathcal L}}	
\newcommand{\levelsetTs}[1]{\ensuremath{\levelset^{#1}}}	
\newcommand{\lnparams}{\ensuremath{\levelset^n(\params)}}	
\newcommand{\snapshotSet}[1]{\ensuremath{\mathsf W_{#1}}}	
\newcommand{\statevariable}{\ensuremath{w}}	
\newcommand{\tnp}{\ensuremath{t^{n+1}}}	
\newcommand{\matvec}[2]{\ensuremath{\left[#1^1\ \cdots \ #1^{#2}\right]}}	
\newcommand{\nRp}{\ensuremath{\nR'}}	
\newcommand{\podresp}{\ensuremath{\podres'}}	
\newcommand{\restrictpost}{\ensuremath{\underline Z}}	
\newcommand{\restrict}[1]{\ensuremath{Z #1}}	
\newcommand{\restrictarg}[1]{\ensuremath{\mathbf Z\left(#1\right)}}	
\newcommand{\mask}[1]{\ensuremath{\bar Z #1}}	
\newcommand{\RS}{\ensuremath{\mathrm{CR}}}	
\newcommand{\WT}{\ensuremath{\mathrm{WT}}}	
\newcommand{\hatpodpseudo}[1]{\ensuremath{\left[\restrict{\Phi_{#1}}\right]^+}}	
\newcommand{\hatpodpseudop}[1]{\ensuremath{\left[\restrict{\Phi_{#1}'}\right]^+}}	
\newcommand{\hatpodrespseudo}{\ensuremath{\hatpodpseudo{R}}}	
\newcommand{\hatpodrespseudop}{\ensuremath{\hatpodpseudop{R}}}	
\newcommand{\hatpodjacpseudo}{\ensuremath{\hatpodpseudo{J}}}	
\newcommand{\iter}{\ensuremath{i}}	
\newcommand{\niter}{\ensuremath{n_\mathrm{it}}}	
\newcommand{\initialCond}{\ensuremath{\state^0}}	
\newtheorem{theorem}{Theorem}[section]
\newtheorem{proposition}[theorem]{Proposition}
\newtheorem{propos}[theorem]{Proposition}
\newenvironment{remark}[1][Remark]{\begin{trivlist}
\item[\hskip \labelsep {\bfseries #1}]}{\end{trivlist}}
\journal{Journal of Computational Physics}
\begin{document}
\begin{frontmatter}

\title{The GNAT method for nonlinear model reduction: effective implementation and application to computational fluid dynamics and turbulent flows}
\author[sandia]{Kevin Carlberg\corref{sandiacor}}
\ead{ktcarlb@sandia.gov}
\ead[url]{sandia.gov/~ktcarlb}
\address[sandia]{Sandia National Laboratories}
\cortext[sandiacor]{7011 East Ave, MS 9159, Livermore, CA 94550. Sandia is a
multiprogram laboratory operated by Sandia
Corporation, a Lockheed Martin Company, for the United States Department of
Energy under contract DE-AC04-94-AL85000.}
\author[stanford]{Charbel Farhat\corref{stanfordcor}}
\ead{cfarhat@stanford.edu}
\address[stanford]{Stanford University} \cortext[stanfordcor]{Durand Building, 496 Lomita Mall, Stanford University, Stanford, CA 94305-3035}
\author[sandia]{Julien Cortial\corref{sandiacor}}
\ead{jcortia@sandia.gov}
\author[stanford]{David Amsallem\corref{stanfordcor}}
\ead{amsallem@stanford.edu}

\begin{abstract}

The Gauss--Newton with approximated tensors (GNAT) method is a nonlinear model
reduction method that operates on fully discretized computational models. It
achieves dimension reduction by a Petrov--Galerkin projection associated with
residual minimization; it delivers computational efficency by a
hyper-reduction procedure based on the `gappy POD' technique.  Originally
presented in Ref.\ \citep{carlbergGappy}, where it was applied to implicit
nonlinear structural-dynamics models, this method is further developed here
and applied to the solution of a benchmark turbulent viscous flow problem. To
begin, this paper develops global state-space error bounds that justify the
method's design and highlight its advantages in terms of minimizing components
of these error bounds. Next, the paper introduces a `sample mesh' concept that
enables a distributed, computationally efficient implementation of the GNAT
method in finite-volume-based computational-fluid-dynamics (CFD) codes. The suitability of GNAT for
parameterized problems is highlighted with the solution of an academic problem
featuring moving discontinuities.  Finally, the capability of this method
to reduce by orders of magnitude the core-hours required for large-scale CFD
computations, while preserving accuracy, is demonstrated with the simulation
of turbulent flow over the Ahmed body.  For an instance of this benchmark
problem with over 17 million degrees of freedom, GNAT 
outperforms several other nonlinear model-reduction methods, reduces the
required computational resources by more than two orders of magnitude, and
delivers a solution that differs by less than 1\% from its high-dimensional
counterpart.

\end{abstract}

\begin{keyword}
nonlinear model reduction \sep  GNAT \sep  gappy POD \sep CFD \sep mesh sampling


\end{keyword}

\end{frontmatter}

\section{Introduction}
\label{sec:intro}

Computational fluid dynamics (CFD) modeling and simulation tools have become indispensable in many
engineering applications due to their ability to enhance the understanding of
complex fluid systems, reduce design costs, and improve the reliability of
engineering systems. Unfortunately, many desired high-fidelity CFD simulations
are so computationally intensive that they can require unaffordable
computational resources or time to completion, even when
supercomputers with thousands of cores are available.  Consequently, such
simulations are often impractical for time-critical applications such as flow
control, design optimization, uncertainty quantification, and system
identification.

Projection-based nonlinear model-reduction methods constitute a promising
approach for bridging the gap betwen CFD and such applications.  These methods
approximate a given high-fidelity model by reducing its dimension, i.e., the
number of equations and unknowns that describe it.  For this purpose, such
methods first perform intensive large-scale computations `offline' to
construct {\it a priori} a low-dimensional subspace onto which they project
the high-dimensional model of interest. This leads to a reduced-order model
(ROM) characterized by low-dimensional operators. Then, in an `online' stage,
they exploit this ROM to compute approximate solutions that lie in this
pre-computed subspace.

Unfortunately, the computational cost associated with assembling the ROM's
low-dimensional operators --- matrices and vectors for implicit schemes, and
vectors for explicit ones --- scales with the large dimension of the underlying
high-dimensional model. For this reason, projection-based
model-reduction methods are efficient primarily for problems where the
aforementioned operators must be constructed only once, or can be assembled
\emph{a priori}. These include linear time-invariant 
systems \citep{antoulas2005approximation,amsallem2009method}, linear
stationary systems whose operators are affine functions of the input
parameters \citep{prud2002reliable,rozza2007reduced}, and a class of nonlinear
systems characterized by quadratic nonlinearities
\citep{veroy2003peb,nguyen2005certified,veroy2005certified}.  Within these
contexts, projection-based model-reduction methods have been successfully
applied to problems in structural dynamics \citep{amsallem2009method},
aerodynamics\ \citep{HTD99,LA,HTD00,WP,E03} and aeroelasticity\
\citep{TDH,KHBS,LFL,LF,Amsallem:2008bi,Amsallem:2010ci}, among others.

On the other hand, when projection is applied to linear time-varying systems,
linear stationary systems with nonaffine parameter dependence, or general
nonlinear problems, the resulting ROM is costly to assemble.  This high cost
arises from the need to evaluate the high-dimensional nonlinear function (and
possibly its Jacobian) at each computational step of a solution algorithm. To
overcome this roadblock, several approaches have been proposed. Such
complexity-reduction techniques are sometimes referred to as hyper-reduction
methods \citep{ryckelynck2005phm,NME:NME4371}.  Several of these techniques
are outlined below.

The `empirical interpolation' method developed for linear elliptic problems
with non-affine parameter dependence \citep{barrault2004eim}, as well as for
nonlinear elliptic and parabolic problems \citep{grepl2007erb}, reduces the
computational cost associated with nonlinearities by interpolating the
governing nonlinear function at a few spatial locations using an empirically
derived basis. This method operates directly on the governing partial
differential equation (PDE) and therefore at the continuous level. Its variant
proposed in Ref.~\citep{PeraireRBnonlinear} relies for the same purpose on
`best [interpolation] points' and a POD basis.  The hyper-reduction method
proposed in Ref.~\citep{haasdonkExplicit,haasdonk2008reduced} extends the
empirical-interpolation method to the case of a scalar conservation law
discretized by an explicit solution algorithm.  However, no further extension
of this method to CFD has been performed, as this is not a straightforward
task. 

Alternatively, the trajectory piece-wise linear (TPWL) method developed in
\citep{tpwl} operates at the semi-discrete level, i.e., on the ordinary
differential equation (ODE) obtained after discretizing the PDE in space. TPWL
constructs a ROM as a weighted combination of linearized models, where each
linearization point lies on a training trajectory.  However, because the
resulting ROM never queries the underlying high-dimensional model away from
the linearization points, this method in principle lacks robustness for highly
nonlinear problems such as those arising from a large class of CFD
applications.

Another class of hyper-reduction methods reduces computational complexity by
computing only a few entries of the nonlinear function (and possibly its
Jacobian) appearing in the governing ODE. In this paper, such methods are
referred to as function-sampling methods; these methods also operate at the
semi-discrete level. They include collocation approaches,
which compute a small subset of the entries of the residual vector. Ref.~\citep{LeGresleyThesis} proposes collocation of the nonlinear equations
followed by a least-squares solution of the resulting overdetermined system of
nonlinear equations. Similarly, Refs.~\citep{astrid2007mpe,ryckelynck2005phm}
propose a collocation of the equations followed by a Galerkin projection.
Function-reconstruction approaches define another subset of function-sampling
methods. In contrast to collocation methods, these techniques use the sampled
entries of the nonlinear function to approximate the entire nonlinear function
by interpolation or least-squares regression.  One such method reconstructs
the governing nonlinear function in the least-squares sense, using the same
basis adopted to represent the state. This approach was developed in for
parameter-varying systems \citep{astrid2007mpe}\footnote{For such problems,
this method is mathematically equivalent to collocation followed by a Galerkin
projection when the resulting overdetermined system is solved by a Galerkin
projection method.} and for nonlinear dynamical systems discretized by
explicit time-integration schemes \citep{bos2004als}. Other
function-reconstruction approaches include the semi-discrete analogs to the
empirical and best points interpolation methods that have been developed for
parameterized nonlinear stationary problems
\citep{chaturantabut2010journal,galbally2009non}, and for nonlinear dynamics
problems \citep{chaturantabut2010journal,drohmannEOI}.

Although some of the function-reconstruction methods outlined above have been
successfully applied to large-scale steady-state problems (e.g., see Ref.~\citep{galbally2009non}), it will be shown that many of them lack the
robustness needed to address highly nonlinear dynamical systems such as those
arising from unsteady CFD applications.

The Gauss--Newton with approximated tensors (GNAT) method
\citep{carlbergGappy} is a nonlinear Petrov--Galerkin projection method
equipped with a function-sampling hyper-reduction scheme.  In constrast with
the aforementioned nonlinear model-reduction methods, it operates at the
discrete level, i.e., on the system of nonlinear equations arising at each
time step, which is obtained after discretizing the PDE in both space and
time.  GNAT is designed around approximations that satisfy consistency and
discrete-optimality conditions.  As such, it has been successfully applied to
nonlinear structural-dynamics problems \citep{carlbergGappy} for which it
demonstrated robustness, accuracy, and excellent CPU performance. 

This work further develops the GNAT methodology and demonstrates its potential
for CFD applications. Section~\ref{sec:PF} formulates the problem of interest,
	and Section~\ref{sec:GO} provides an overview of GNAT.  Within this
	overview, a new consistent snapshot-collection procedure is proposed in
	Section~\ref{sec:consistentProj}. Next, Section~\ref{sec:errBackEuler}
	presents global error bounds for the discrete fluid state in the case of the
	implicit backward-Euler scheme.  This time integrator may be of little
	practical importance to CFD, but the developed error bounds highlight the
	merits of the principles underlying the construction of a GNAT ROM. Then,
	Section~\ref{sec:implementation} proposes a simple yet effective
	implementation of GNAT's online stage on parallel computing platforms.  This
	implementation features the concept of a `sample mesh', which is a carefully
	chosen {\it tiny} subset of the original CFD mesh on which all online GNAT
	computations are performed.  The sample mesh has few connectivity
	requirements and is therefore easily amenable to partitioning for parallel
	distributed computations. Most importantly, the implementation can be
	tailored to any specific CFD scheme or software, and to the fast computation
	of outputs such as pressure coefficients, lift, and drag.
	Section~\ref{sec:experiments} demonstrates the potential of GNAT to
	effectively reduce the dimension and complexity of highly nonlinear CFD
	models while maintaining a high level of accuracy. In
	Section~\ref{sec:burgers}, GNAT is applied to a parameterized hyperbolic
	problem featuring a moving shock; GNAT's online performance is tested for
	parameter values different from those used to collect simulation data
	offline. Section~\ref{sec:ahmedFine} demonstrates the potential of GNAT for
	challenging CFD applications with the fast solution of a benchmark turbulent
	flow problem with over 17 million unknowns.  For this problem, GNAT
	outperforms many of the aforementioned nonlinear model-reduction methods.
	It reproduces the solution delivered by the high-dimensional CFD model with
	less than 1\% discrepancy, while reducing the associated computational cost
	in core-hours by more than two orders of magnitude.  Finally,
	Section~\ref{sec:Conc} offers conclusions.
\section{Problem formulation}
\label{sec:PF}

\subsection{Parameterized nonlinear CFD problem}

Consider the ODE resulting from semi-discretizing the conservation
form of the compressible Navier-Stokes equations by a finite difference,
finite volume, or stabilized finite element method  --- possibly augmented by
a turbulence model --- and a given set of boundary conditions:
\begin{align}\label{eq:ODE}
\begin{split}
\frac{d\state}{dt}& = \displaystyle{F\left(\state(t),t;\params\right)}\\
\state(0)& = \state^0(\params).
\end{split}
\end{align}
Let
\begin{align} \label{eq:outODE}
\begin{split}
z &= H\left(\state(t),\params\right)\\
& = L(\params)
\end{split}
\end{align}
denote the outputs of interest that may include the lift, drag, and other
quantitites.

Here, $t\in\RR{+}$ denotes time, $\state \in\RR{N}$ denotes the discrete fluid
state vector (i.e., the vector of discrete conserved fluid variables), $N$
designates the dimension of the semi-discretization and is typically large,
$\state^0:\RR{d} \rightarrow\RR{N}$ denotes the parameterized initial
condition, and $F:\RR{N}\times\RR{+}\times \RR{d}\rightarrow \RR{N}$ is the
nonlinear function arising from the semi-discretization of the convective and
diffusive fluxes and source term (when present). The $d$ input parameters,
which may include shape parameters, free-stream conditions, and other design
or analysis parameters of interest, are denoted by $\params \in \RR{d}$. The
(feasible) input-parameter domain is denoted by $\mathcal D$, and therefore
$\params\in \mathcal D\subset \RR{d}$. $H:\RR{N}\times \RR{d}\rightarrow
\RR{p}$ and $L:(\params)\mapsto H(\state(t;\params),\params)$ define two
equivalent mappings for the output vector $z\in\RR{p}$.

In this work, attention is occasionally focused on a finite-volume
semi-discretization method operating on a dual CFD mesh. Consequently, the
sampling concepts discussed in Sections~\ref{sec:GO} and
\ref{sec:implementation} are node/cell oriented. However, all concepts,
algorithms, and techniques presented in this paper are easily extendible to
finite-difference and stabilized finite-element semi-discretization methods.

\subsection{Objective: time-critical analysis}\label{sec:realtime}

Consider the following objective: given inputs $\muonline\in\mathcal D$,
compute fast approximations of the outputs $\tilde L(\muonline) \approx
L(\muonline)$, where ``fast'' is defined in one of the following senses:
 \begin{enumerate} 
 \item The evaluation takes a sufficiently small amount of \emph{time}. This
 is relevant to applications that demand near-real-time analysis, where the
 objective is to compute outputs in a time below a threshold value; the number
 of computational cores required to perform the analysis is not a primary
 concern. Examples include flow control and routine analysis, where the
 analyst may require an answer within a given time frame.
\item The evaluation consumes a sufficiently small amount of
\emph{computational resources}, as measured by computational cores multiplied
by time. This is relevant to many-query applications, where the objective is
to evaluate the model at as many points in the input space as possible, given
a fixed amount of wall time and processors. Examples include aerodynamic shape
optimization and uncertainty quantification.
 \end{enumerate}
When the number of degrees of freedom $N$ of the high-dimensional CFD model is
sufficiently large, solving the state equations (\ref{eq:ODE}) with
$\params = \muonline$ and subsequently computing the outputs by
Eq.~\eqref{eq:outODE} becomes
prohibitively time- and resource-intensive for time-critical applications.

Instead, the following two-stage offline--online strategy can be employed.
During the offline stage, Eq.~(\ref{eq:ODE}) is solved for
$d_{\mathrm{train}}$ points in the input-parameter domain; this defines the
training domain $\mathcal D_{\mathrm{train}} \equiv \{\params^j\}_{j =
1}^{d_{\mathrm{train}}}\subset \mathcal D$. The data acquired during these
training simulations are then used to construct a surrogate model that is
capable of rapidly reproducing the behavior of the high-dimensional model at
arbitrary points in $\mathcal D$.  The online stage uses this surrogate model
to perform time-critical analysis for inputs $\muonline$ with $\muonline\not\in\mathcal
D_{\mathrm{train}}$ in general.

In contrast to surrogate-modeling techniques based on data-fit approximations
(e.g., response surfaces) of the input--output map $L(\params)$,
model-reduction methods take a more physics-based approach. These techniques
aim to achieve time-critical analysis by \emph{approximately} solving the
(physics-based) state equations for the online inputs $\muonline$ and then
computing the resulting outputs. The next section provides an overview of the
GNAT model-reduction method developed for this purpose.
\section{Overview of the GNAT model-reduction method}
\label{sec:GO}
To keep this paper as self-contained as possible, this section
provides an overview of the Gauss--Newton with approximated tensors (GNAT)
nonlinear model-reduction method first proposed in Ref.~\citep{carlbergGappy}.

\subsection{Computational strategy and numerical properties}
\label{sec:consistency}

Nonlinear model reduction is often performed in a somewhat \emph{ad hoc}
manner.  As a result, nonlinear ROMs often lack important numerical
properties. To avoid this pitfall, the design of the GNAT method employs a
computational strategy that constructs approximations to meet conditions
related to notions of \emph{discrete optimality} and \emph{consistency}.

The design of GNAT is based on the premise that if a given computational model
is unaffordable for a given time-critical application, approximations can be
introduced to this model to make it more economical yet retain
an appropriate level of accuracy.
This premise leads to a hierarchy of models characterized by
tradeoffs between accuracy and computational complexity.  Then, the objective
is to construct computationally efficient approximations that introduce
minimal error with respect to the previous model in the hierarchy. GNAT
achieves this objective by relying on the concepts of discrete optimality and
consistency introduced in Ref.~\citep{carlbergGappy}. Both concepts are
recalled below.

\begin{definition}{Discrete-optimal approximation}
Here, an approximation is said to be discrete optimal if it leads to
approximations that --- at the discrete level --- minimize an error measure
associated with the previous model in the hierarchy. This ensures that some
measure of the error in the discrete approximation decreases monotonically as
the approximation spaces expand (a property also referred to as \emph{a
priori} convergence).
\end{definition}

\begin{definition}{Consistent approximation}
Here, an approximation is said to be consistent if, when implemented without
snapshot compression, it introduces no additional error in the solution at the
training inputs.
\end{definition}

Figure \ref{fig:strategy} depicts the model hierarchy employed by GNAT. This
hierarchy consists of three computational models: the original tier
$\mathrm{I}$ high-dimensional model and two increasingly approximated models
referred to as the tier $\mathrm{II}$ and tier $\mathrm{III}$ (reduced-order)
models, respectively.  Each of these reduced-order models is generated by 1)
acquiring snapshots from simulations performed for training inputs $\mathcal
D_{\mathrm{train}}$ using the previous (i.e., more accurate) model in the
hierarchy, 2) compressing the snapshots, and 3) introducing an approximation
that exploits the compressed snapshots.

\begin{figure}[htbp]
\centering
\includegraphics[width=11cm]{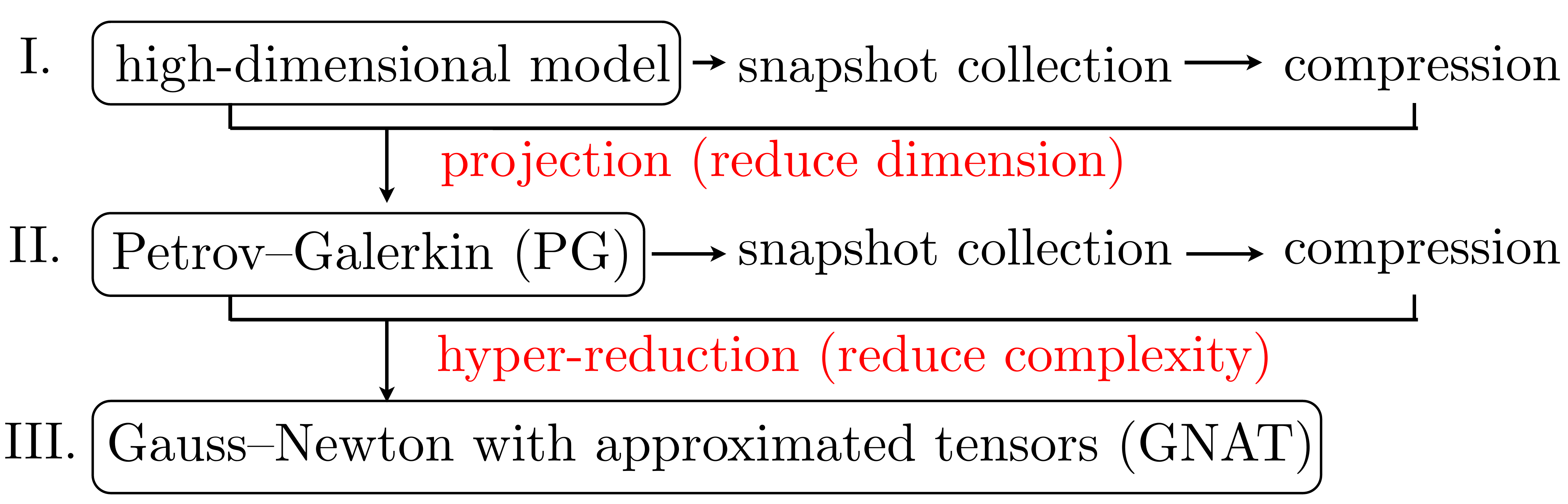}
\caption{Model hierarchy with approximations shown in red.}
\label{fig:strategy}
\end{figure}

\subsection{Fully discrete computational framework}\label{sec:fullyDiscrete}

As stated in the introduction, GNAT operates at the discrete level. That is,
the method introduces approximations after ODE \eqref{eq:ODE} has been
discretized in time. Hence, the ROM constructed by GNAT is a low-dimensional
algebraic system that governs the solution of ODE~\eqref{eq:ODE} at each time
step; it cannot generally be expressed as a low-dimensional ODE.  As such,
GNAT may be less convenient than other model reduction methods: the ROM it
produces is valid only for the time-integrator adopted for the
high-dimensional model.  However, Section \ref{sec:projection} will show that
the fully discrete framework enables GNAT to achieve discrete optimality.

Throughout the remainder of this paper, it is assumed that Eq.\ \eqref{eq:ODE}
is solved by an implicit linear multi-step time integrator. In this case, if
$\nt$ time steps are carried out, a sequence of $\nt$ systems of nonlinear
equations arises.  Each of these systems can be written as
\begin{equation}\label{eq:FOMdyn}
R^n(\state^{n+1};\params) = 0
\end{equation}
for $n=0,\ldots,\nt-1$, with outputs 
\begin{align} \label{eq:outdyn}
\begin{split}
z & = G(\state^0,\ldots, \state^{\nt},\params).
\end{split}
\end{align}
Here, a superscript $n$ designates the value of a variable at time step $n$,
the operators $R^n:\RR{N}\times \RR{d}\rightarrow \RR{N}$ for $n=0,\ldots,
\nt-1$ are nonlinear in both arguments, and $G:\RR{N}\times\cdots
\times\RR{N}\times \RR{d}\rightarrow \RR{p}$. The fluid state vectors
$\state^n$, $n=1,\ldots,\nt$ are implicitly defined by Eq.\  (\ref{eq:FOMdyn})
for a given $\params$, and $\state^0=\state^0(\params)$ is given by the
	initial condition. 

For simplicity, consider one instance of Eq.\  (\ref{eq:FOMdyn}) defined by
one time instance and one vector of input parameters. Such an instance can be
written as
\begin{equation}
\label{eq:FOM} R(\state) = 0.
\end{equation}
Here, the fluid state vector $\state\in\RR{N}$ is implicitly defined by
Eq.~(\ref{eq:FOM}), and $R:\RR{N}\rightarrow \RR{N}$ with $\statevariable
\mapsto R(\statevariable)$ is a nonlinear mapping. In the remainder of this
paper, Eq.~(\ref{eq:FOM}) is associated with the high-dimensional CFD model
(tier $\mathrm{I}$ in Figure \ref{fig:strategy}).

\subsection{Petrov--Galerkin projection}\label{sec:projection}

To reduce the dimension of Eq.~(\ref{eq:FOM}), the GNAT method employs a
projection process. This leads to the tier $\mathrm{II}$ ROM in the model
hierarchy. Specifically, GNAT seeks an approximate solution $\tilde \state$ to
Eq.~(\ref{eq:FOM}) in the affine trial subspace $\initialCond + \mathcal
W\subset \RR{N}$ of dimension $n_\state\ll N$. Hence, $\tilde \state$ can be
written as
\begin{equation}\label{eq:yApprox}
\tilde \state = \initialCond + \podstate \state_r,
\end{equation}
where $\podstate\in\RR{N\times n_{\state}}$ is a matrix
representing an $\nstate$-dimensional basis for $\mathcal W$, and
$\state_r\in\RR{n_\state}$ denotes the generalized coordinates of the fluid
state vector in this basis.
Note that the \emph{increment} in the state
$\tilde \state - \initialCond$ and not the state itself is sought in the
subspace $\mathcal W$.  This is an important consideration when defining the
basis $\podstate$ as will be described in Section \ref{sec:consistentProj}.

\subsubsection{Discrete optimality}\label{sec:leftproj}
Substituting Eq.\ \eqref{eq:yApprox} into Eq.\ \eqref{eq:FOM} yields
$R(\initialCond + \podstate\state_r)=0$, which represents an overdetermined
system of $N$ equations in $\nstate$ unknowns. Consequently, the GNAT method
computes $\tilde \state$ as the solution to the minimization problem
\begin{gather}\label{eq:GNPG}
\underset{\bar \state\in \initialCond+\mathcal W}{\mathrm{minimize}}\
\|R(\bar \state)\|_2.
\end{gather}
GNAT solves this nonlinear least-squares problem by the Gauss--Newton
method, which is globally convergent under certain assumptions. This method
delivers a solution that is \emph{discrete optimal at each time step}: the
solution minimizes the discrete residual associated with the tier $\mathrm{I}$
model over the trial subspace.  This residual-minimization approach is
mathematically equivalent to performing a Petrov--Galerkin projection with a
test basis corresponding to $\displaystyle\frac{\partial R}{\partial
\state}\podstate$ (see Ref.~\citep{carlbergGappy}). Note that the
fully discrete framework described in Section \ref{sec:fullyDiscrete} enables
discrete optimality to be achieved: the test basis depends on the discrete
residual and thus depends on the time integrator, so it is not defined at the
semi-discrete level.

Ref.~\citep{carlbergGappy} numerically demonstrated the superior accuracy
delivered by the tier II Petrov--Galerkin ROM associated with Eq.\ \eqref{eq:GNPG} compared with its
counterpart based on a (more commonly used) Galerkin projection when applied
to a non-self-adjoint problem characterized by an unsymmetric residual
Jacobian, which is typical in CFD.  This strong performance is likely due to
the discrete optimality property, which Galerkin projection lacks for such
problems. However, for the relatively small class of CFD problems
characterized by a symmetric residual Jacobian, a Galerkin projection is also
discrete optimal, as it minimizes the discrete residual, albeit for a different norm
\citep{carlbergGappy}.

\subsubsection{Consistency}\label{sec:consistentProj}
To ensure a consistent projection, the basis $\podstate$ can be computed by
proper orthogonal decomposition (POD) using a specific set of snapshots
collected from simulations performed using the tier $\mathrm{I}$
CFD model at the training inputs.  Specifically, given a
snapshot matrix $W \in \RR{N\times \nsnap}$, a POD basis $\Phi\in\RR{N\times
n_{\Phi}}$ of dimension $n_{\Phi}\leq \nsnap$ is obtained by first computing
the (thin) singular value decomposition (SVD)
 \begin{equation} 
	 W = U\Sigma V^T,
  \end{equation} 
	where the superscript $T$ designates the transpose, the left-singular-vector matrix $U\equiv\matvec{u}{\nsnap}\in \RR{N\times
	\nsnap}$ satisfies $U^TU = I$, the singular-value matrix $\Sigma =
	\mathrm{diag}(\sigma_i)$ satisfies
	$\sigma_1\geq\sigma_2\geq\cdots\geq \sigma_{\nsnap}\geq 0$, and the right-singular-vector matrix $V\in\RR{\nsnap\times
	\nsnap} $ satisfies $V^TV=I$. Then, the sought-after POD basis is obtained by selecting the first
	$n_{\Phi}\leq \nsnap$ left singular vectors: $\Phi = \matvec{u}{n_{\Phi}}$. As a result,
	$\Phi$ has orthonormal columns and satisfies $\Phi^T\Phi = I$. Often, $n_{\Phi}$ is
	determined from an energy criterion such that the POD basis captures a
	fraction of the statistical energy of the snapshots.

Ref.~\citep{carlbergGappy} proved that the Petrov--Galerkin projection defined
by Eq.~\eqref{eq:GNPG} is consistent when $\podstate$ is computed by POD with
snapshots of the form $\{\state^n(\params)-\state^{n(0)}(\params) \ | \
n=1,\ldots,\nt, \params \in \mathcal D_{\mathrm{train}}\}$, where
$\state^{n(0)} = \state^{n-1}$, $n=1,\ldots,\nt$  denotes the initial guess for the Newton
solver. This implies that the snapshots used for POD should correspond to the
solution \emph{increment} at each time step of the training simulations. This
remark is noteworthy because most nonlinear model-reduction techniques reported in
the literature employ a POD basis computed using snapshots
$\{\state^n(\params) \ | \ n=0,\ldots,\nt, \params \in \mathcal
D_{\mathrm{train}}\}$, which do not lead to a consistent projection. 

Here, an alternative snapshot-collection procedure is proposed:
$\{\state^n(\params)-\state^{0}(\params)\ |\ n=1,\ldots,\nt, \params \in
\mathcal D_{\mathrm{train}}\}$. These snapshots also lead to a consistent
projection under certain conditions. \ref{app:snapshotConsistency} discusses
these conditions and contains the proof that both snapshot-collection
procedures lead to a consistent Petrov--Galerkin projection.

\subsection{Hyper-reduction}\label{sec:systemApprox}

Solving the least-squares problem (\ref{eq:GNPG}) by the Gauss--Newton method
leads to the following iterations: for $k=1,\ldots,K$, solve the linear
least-squares problem
\begin{equation}\label{eq:newtonRed1PGLS} \pk = \arg\min_{a\in\RR{n_\state}}
\|\jk\podstate a + \rk\|_2 
\end{equation} 
and set
\begin{equation}
\label{eq:newtonRed2PGLS}\state_r^{(k+1)} =
\state_r^{(k)}+\alpha^{(k)}\pk, \end{equation} 
where $K$ is determined by the
satisfaction of a convergence criterion, $\state_r^{(0)}$ is the initial
guess (often taken to be the generalized coordinates computed at the previous time step), and $\rk\equiv
R\left(\initialCond + \podstate\statecoords^{(k)}\right)$ and $\displaystyle\jk\equiv
\frac{\partial R}{\partial w}\left(\initialCond +
\podstate\statecoords^{(k)}\right)$ are the nonlinear residual and its
Jacobian at iteration $k$, respectively. The step length $\alpha^{(k)}$
is computed by executing a line search in the direction $\pk$ to ensure
convergence, or is set to the canonical step length of unity. Even though the
dimension of the trial subspace is small, the computational cost of solving
the above nonlinear least-squares problem scales with the dimension $N$ of the
tier $\mathrm{I}$ high-dimensional CFD model. As mentioned in the introduction, this is the computational
bottleneck faced by many (if not all) projection-based nonlinear model
reduction techniques. The role of hyper-reduction (referred to as system
approximation in Ref.~\citep{carlbergGappy}) is to decrease this computational
cost.

\subsubsection{Optimality}\label{sec:sysApproxOptimality}
To address the performance bottleneck identified above, GNAT employs the gappy
POD data reconstruction technique~\citep{sirovichOrigGappy}. In the context of
GNAT, gappy POD leads to approximations of the one- and two-dimensional
tensors $\rk$ and $\jk\podstate$, respectively, by computing only a small
subset of their rows. Denoting by $\sampleindices\equiv\{\mathsf i_1, \mathsf
i_2,\ldots, \mathsf i_{\nin}\}\subset\oneton$ the set of $\nin$ sample indices
for which these functions are evaluated, the sample matrix is defined as 
 \begin{equation} \label{eq:Z}
\restrict{} \equiv \restrictarg{\mathcal I}\in\RR{N\times\nin},
 \end{equation} 
where
\begin{align}
\restrictarg{\mathcal Y}&\equiv \left[e_{\mathsf{y}_1}\ \cdots\ e_{\mathsf{y}_{n_{\mathsf y}}}\right]^T\nonumber\\
\mathcal Y&\equiv\{{\mathsf{y}_1}, \ldots\, {\mathsf{y}_{n_{\mathsf y}}}\},
\end{align}
and $e_i$ is the $i$th canonical unit vector.

Given these sample indices and bases $\Phi_R\in \RR{N\times \nR}$ and
$\Phi_J\in\RR{N\times\nJ}$, GNAT approximates $\rk$ and $\jk\podstate$ via
gappy POD as follows:
\begin{align}
\label{eq:sa1}\tilderk &= \Phi_R\hatpodrespseudo \restrict{\rk} \\
\label{eq:sa2}\tildejk &= \Phi_J\hatpodjacpseudo \restrict{\jk\podstate},
\end{align}
where the superscript $+$ designates the Moore--Penrose pseudo-inverse.
Approximations \eqref{eq:sa1}--\eqref{eq:sa2} are discrete optimal in the sense that the error
measures $\|\restrict{\rk}-\restrict{\tilderk}\|_2$ and
$\|\restrict{\jk\podstate}-\restrict{\tildejk}\|_F$
monotonically decrease as columns are added to $\podres$ and
$\podjac$, respectively.

Substituting $\rk=\tilderk$ and $\jk\podstate = \tildejk$ in Eqs.\
(\ref{eq:newtonRed1PGLS})--(\ref{eq:newtonRed2PGLS}) and assuming that
$\podjac^T\podjac = I_{n_J}$ --- which is easily achievable by computing
$\podjac$ using POD --- the tier $\mathrm{II}$ 
Petrov--Galerkin iterations are transformed into the tier $\mathrm{III}$ GNAT
iterations
\begin{align}\label{eq:newtonRed1approx}
\pk &= \arg\min_{\upsilon \in\RR{n_\state}} \|A\restrict{\jk\podstate} \upsilon +
B\restrict{\rk}\|_2\\
\label{eq:newtonRed2approx}\state_r^{(k+1)} &= \state_r^{(k)}+\alpha^{(k)}\pk.
\end{align} 
Here, the matrices $A = \hatpodjacpseudo\in \RR{n_J\times \nin}$ and $B =
\Phi_J^T\Phi_R\hatpodrespseudo\in\RR{n_J\times\nin}$ can be computed \textit{a
priori}, during the offline stage.

Note that in CFD, the Jacobian matrix is usually sparse. Hence, computing
$\restrict{\rk}$ and $\restrict{\jk\podstate}$ does not require access to all
entries of the fluid state vector. For this reason, let $\mathcal J$ denote the
minimum-cardinality set of indices of the fluid state vector that influences
the residual entries corresponding to sample-index set $\sampleindices$. 
GNAT requires access to only the `masked' state $\mask{^T}\mask{\state}$, where
$\mask\equiv \restrictarg{\mathcal J}$ and $\nmask\equiv |\mathcal
J|$.  The algebraic products implied by this masked state vector can be obtained
by evaluating only the $\mathcal J$ entries of the state $\state$. 

After the index sets $\mathcal I$ and $\mathcal J$ are determined (see
Section~\ref{sec:sampleNodeSelection} for a discussion on determining these
index sets), the online GNAT iterations executed at each time step can proceed
as follows:
\begin{enumerate}
	\item Compute $\mask \tilde \state^{(k)} = \mask\initialCond+ \mask\podstate \statecoords^{(k)}$, which
	requires updating only $\nmask$ entries of the state.
	\item Compute $\displaystyle{C^{(k)}=\restrict\frac{\partial R}{\partial
	w}\left(\mask^T\mask \tilde \state^{(k)}\right)\mask^T\mask\podstate}$ and
	$D^{(k)}=\restrict{R(\mask^T\mask
	\tilde \state^{(k)})}$,
	which necessitates computing only $\nin$ rows of $\rk$ and $\jk\podstate$,
	respectively.
	\item Compute the low-dimensional products $AC^{(k)}$ and $BD^{(k)}$.
	\item Solve the reduced-order least-squares problem
$ \pk = \arg\min\limits_{\upsilon \in\RR{n_\state}} \|AC^{(k)} \upsilon + BD^{(k)}\|_2$.
	\item Update the $\nstate$ generalized coordinates $\state_r^{(k+1)}$ using Eq.\ \eqref{eq:newtonRed2approx}.
\end{enumerate}
Because none of the above computations scales with the large dimension $N$ of
the high-dimensional CFD model, the cost of the online stage of GNAT is
typically very small.

\subsubsection{Consistency}\label{sec:gapConsistency}
To ensure consistency in the hyper-reduction procedure outlined above, the bases
$\Phi_R$ and $\Phi_J$ can be constructed using POD with snapshots that verify
certain properties. For example, Ref.~\citep{carlbergGappy} introduced three
conditions on the snapshots that together ensure consistency. These
conditions lead to a hierarchy of snapshot-collection procedures that trade
consistency for more affordable offline resources such as core-hours and
storage. Table \ref{table:snapMethods} summarizes these procedures.

\begin{table}[htd]
\centering
\begin{tabular}{||c||c|c|c|c||}
\hline
Procedure identifier & 0 & 1 & 2  & 3 \\
\hline
Snapshots for $\rk$ &  $\rk_{\mathrm I}$& $\rk_{\mathrm{II}}$ &
$\rk_{\mathrm{II}}$& $\rk_{\mathrm{II}}$\\
Snapshots for $\jk\podstate$ & $\rk_{\mathrm I}$&$\rk_{\mathrm{II}}$&
$\left[\jk\podstate\pk\right]_{\mathrm{II}}$ &
$\left[\jk\podstate\right]_{\mathrm{II}}$\\
\hline
\# of simulations per training input& 1 &2&2 & 2\\
\# of snapshots per Newton iteration & 1 & 1 & 2 & $n_\state+1$\\
\# of conditions for consistency satisfied & 0 &1&2 & 3\\
\hline
\end{tabular}
\caption{Snapshot-collection procedures for the tier $\mathrm{III}$ GNAT ROM.
The indicated snapshots are saved at each Newton iteration for the training simulations. Subscripts $\mathrm{I}$ and 
$\mathrm{II}$ specify the tier of the model for which the snapshots are collected.}
\label{table:snapMethods}
\end{table}

Procedure 3 ensures a consistent hyper-reduction because it satisfies all three
conditions that together are sufficient for consistency.  However, it is
infeasible for most problems as it requires storing either $n_\state+1$
vectors or the (sparse) high-dimensional residual Jacobian at each
Newton step of the training simulations. 

Procedure 2 does not provide a consistent hyper-reduction, although it satisfies two of the three
consistency conditions.  Furthermore, it is computationally feasible as it
requires saving only two vectors per Newton step.

Procedure 1 is more economical than procedure 2, as it requires saving only one
vector per Newton iteration and it computes one fewer POD basis. Further,
procedure 1 uses the same POD basis for the residual and its Jacobian; when this
occurs, the GNAT iterations are equivalent to the Gauss--Newton iterations for
minimizing $\tilderk$, which can abet convergence (see \ref{sec:SCP}).
However, procedure 1 satisfies only one of the three consistency conditions.

Procedure 0 requires performing only one tier $\mathrm{I}$ simulation for each
training input and therefore is similar to conventional approaches for
collecting snapshots~\citep{barrault2004eim,grepl2007erb,PeraireRBnonlinear,
chaturantabut2010journal,galbally2009non}; however, it satisfies none of  the
aforementioned consistency conditions.  \ref{sec:SCP} offers an
additional discussion of these snapshot-collection procedures.

\subsection{Computation of outputs}\label{sec:outputComputation}

After GNAT computes generalized coordinates $\statecoords^n$,
$n=1,\ldots,\nt$ for online inputs $\muonline\in\mathcal D$, the outputs $z$
can be computed. Because the objective is to ensure that no online computation
scales with the large dimension $N$ of the high-dimensional CFD model, an
alternative method to computing the outputs via Eq.\
\eqref{eq:outputExpensive} below is needed:
\begin{equation} \label{eq:outputExpensive}
	 z = G(\initialCond,\initialCond
	 +\podstate\statecoords^1,\ldots,\initialCond
	 +\podstate\statecoords^{\nt},\muonline).
  \end{equation} 
Indeed, this expression implies matrix--vector products of the form
$\podstate y$ that entail $\mathcal O(N\nstate)$ operations. 

In general, the outputs cannot be computed during the online stage of a GNAT
simulation, because the outputs may depend on entries of the state vector that are
not included in $\mathcal J$. For example, the drag force exerted on an
immersed body depends on the conserved fluid variables at all nodes located on
its wet surface, but the index set $\mathcal J$ will not generally include all
of these indices.

Instead, the outputs can be efficiently computed in a post-processing step
that accesses only the computed generalized coordinates 
$\statecoords^n$ for $n=1,\ldots,\nt$, and the rows of the initial condition
$\initialCond$ and POD basis $\podstate$ needed for the desired output
computation. To this effect, let $\mathcal K$ denote the minimum-cardinality
set of indices of the fluid state vector that affects the output computation.
Given generalized coordinates computed by GNAT online, the outputs can be
computed as
 \begin{equation} 
	 z =
	 G(\restrictpost^T\restrictpost\initialCond,\restrictpost^T\restrictpost\initialCond
	 +\restrictpost^T\restrictpost\podstate\statecoords^1,\ldots,\restrictpost^T\restrictpost\initialCond
	 +\restrictpost^T\restrictpost\podstate\statecoords^{\nt},\muonline),
  \end{equation} 
where
	$\restrictpost\equiv\restrictarg{\mathcal K}$ and
	$\nmaskout\equiv|\mathcal K|$. This approach entails products of
	the form $\restrictpost^T\restrictpost\podstate y$ that require performing
	computations with only the $\mathcal K$ rows of $\podstate$ and incur
	$\mathcal O(\nmaskout \nstate)$ operations. This operation count is small if $\nmaskout \ll N$. Fortunately, this condition holds in the case of
        spatially local outputs such as the value of flow variables at several points in the domain, or the lift and drag, which are associated with the wet surface 
        of a body. This condition does not hold for spatially global outputs.

\subsection{Offline--online decomposition}\label{sec:offlineOnline}

In summary, GNAT builds a `global' ROM --- that is, a ROM trained at multiple
points in the input-parameter space --- and performs a ROM simulation in two stages as follows:\\

\emph{Offline stage}
\begin{enumerate}
\item Perform tier $\mathrm{I}$ simulations at various training inputs
$\mathcal D_{\mathrm{train}}$.  Collect snapshots of the fluid state vector
(and snapshots of the residual if using snapshot-collection procedure 0 of
Table \ref{table:snapMethods}).
during these training simulations.
\item Compute POD basis $\podstate$ using the collected fluid-state
snapshots. 
\item If snapshot-collection procedure 1, 2, or 3 is employed, perform tier
$\mathrm{II}$  simulations at training inputs $\mathcal D_{\mathrm{train}}$.
Collect snapshots for the residual and its Jacobian during these training
simulations as specified by Table \ref{table:snapMethods} .
\item Compute POD bases $\podres$ and $\podjac$ using the
collected snapshots. To ensure the matrix $A\restrict{\jk\podstate}$ has full
rank, enforce $n_J\ge n_\state$.
\item\label{step:sampleIndices} Determine the sample-index set $\mathcal I$
(for example, see Section \ref{sec:sampleNodeSelection}), and consequently 
index set $\mathcal J$. To ensure uniqueness for the
gappy POD approximations, enforce $\nin\ge \nR$ and $\nin\ge n_J$.
\item\label{step:offlinematrices} Compute matrices $A = \hatpodjacpseudo$
and $B = \Phi_J^T\Phi_R\hatpodrespseudo$ to be used during online
computations.
\item\label{step:sampleMesh} Determine the index set $\mathcal K$ related to
output computation. 
\item\label{step:sampleMesh} Using index sets $\mathcal I$, $\mathcal J$, and $\mathcal K$,
construct the associated sample meshes (see Section~\ref{sec:implementation}) ---
which are tiny subsets of the CFD mesh --- on which to perform the online
stage of a GNAT simulation as summarized below.
\end{enumerate}
\emph{Online stage}
 \begin{enumerate} 
 \item\label{step:onlinegnat} Apply Algorithm \ref{alg:onlinegnat} to perform
 the GNAT ROM simulation for inputs $\muonline\in\mathcal D$ specified online.
 \item\label{step:postprocess} Apply Algorithm \ref{postProcess} to compute
 the desired outputs.
 \end{enumerate}

\begin{algorithm}[htbp]
\caption{Online step \ref{step:onlinegnat}: GNAT ROM simulation}
\begin{algorithmic}[1]\label{alg:onlinegnat}
\REQUIRE{online matrices $A$ and $B$, online inputs $\muonline\in\mathcal
D$, initial condition $\mask{\state^0(\muonline)}$, and state POD basis
$\mask{\podstate}$}
\ENSURE{generalized coordinates $\state_r^n(\muonline)$, $n=1,\ldots,\nt$}
\FOR {$n=0,\ldots,\nt-1$}
\STATE Choose initial guess $\statecoords^{n+1(0)}$ (e.g.,
$\statecoords^{n+1(0)}\leftarrow \statecoords^n$ with $\statecoords^0 = 0$).
\STATE $k\leftarrow 0$
\WHILE {not converged}
\STATE \label{step:resEval}Compute
\begin{align*}
C^{n(k)} &=\restrict{}\frac{\partial R^n}{\partial w}\left(\mask^T\mask
\initialCond +
\mask^T\mask\podstate\statecoords^{n+1{(k)}};\muonline\right)\mask^T\mask\podstate\\
D^{n(k)} &=\restrict{}R^n\left(\mask^T\mask
\initialCond +
\mask^T\mask\podstate\statecoords^{n+1{(k)}};\muonline\right).
\end{align*}
\STATE \label{step:linAlg}Compute
\begin{align*}
\pnk &= \arg\min_{\upsilon \in\RR{n_\state}} \|AC^{n(k)} \upsilon + BD^{n(k)}\|_2\\
\statecoords^{n+1(k+1)} &= \statecoords^{n+1(k)}+\alpha^{n+1(k)}\pnk,
\end{align*} 
where $\alpha^{n+1(k)}$ is computed via line search search or is set to 1.
\STATE $\mask{\tilde\state^{n+1(k+1)}}\leftarrow
\mask{\initialCond}+\mask{\podstate} \statecoords^{n+1(k+1)}$
\STATE $k\leftarrow k+1$
\ENDWHILE
\STATE Set $\state_r^{n+1}(\muonline)\leftarrow\state_r^{n+1(k)}$ and save it;
it will be used to compute outputs using Algorithm \ref{postProcess}.
\ENDFOR
\end{algorithmic}
\end{algorithm}

\begin{algorithm}[htb]
\caption{Online step \ref{step:postprocess}: computation of outputs}
\begin{algorithmic}\label{postProcess}
\REQUIRE{online inputs $\muonline\in\mathcal D$, generalized coordinates
$\statecoords^n(\muonline)$, $n=1,\ldots,\nt$, initial condition
$\restrictpost\state ^{0}(\muonline)$, and state POD basis $\restrictpost\podstate$}
\ENSURE{outputs $z$}
\FOR {$n=1,\ldots,\nt$}
\STATE $\restrictpost\tilde\state ^{n}\leftarrow\restrictpost\state ^0 +\restrictpost\podstate
\statecoords^n$
\ENDFOR
\STATE Compute $z= {G}(\restrictpost^T\restrictpost\state ^0,\ldots,
\restrictpost^T\restrictpost\tilde\state
^{\nt},\muonline)$
\end{algorithmic}
\end{algorithm}
\section{Error bounds}\label{sec:errBackEuler}

Here, error bounds are developed for any discrete nonlinear model-reduction
method assuming that time discretization is performed using the backward-Euler
scheme. These bounds highlight the advantages of the GNAT method, as it
minimizes components of these error bounds.  

When Eq.~\eqref{eq:ODE} is time discretized using the backward-Euler scheme, 
the residual corresponding to time step $n$,
input parameters $\params$, and the sequence of states computed by the
high-dimensional CFD
model $\staten$, $n=0,\ldots,\nt$ 
can be written as
\begin{gather}\label{eq:backEulerResidual}
R^n(\statenp;\params) = \statenp-\staten-\Delta t F(\statenp,\tnp;\params).
\end{gather}
\begin{proposition}
Assume $f:(\state,t;\params)\mapsto  \state - \Delta t F(\state,t;\params)$ satisfies the following inverse
Lipschitz continuity condition for the online input $\muonline\in\mathcal D$
\begin{equation}\label{eq:firstLipschitz}
\frac{\|f(\state,\tn;\muonline)-f(y,\tn;\muonline)\|}{\|\state-y\|} \geq
\varepsilon > 0,\quad \forall
n\in\{1,\ldots,\nt\}.
\end{equation}
Furthermore, assume that the high-dimensional CFD model employs the backward-Euler scheme for
time-integration and computes states $\staten$, $n=1,\ldots,\nt$ that satisfy
an absolute tolerance for the residual
 \begin{equation} 
\|R^n(\statenp;\muonline)\|\leq \epsilon_\mathrm{Newton},\quad \forall
n\in\{0,\ldots,\nt-1\}.
  \end{equation} 

Then, for any sequence of states $\stateApproxn$, $n=0,\ldots, \nt$ satisfying
$\stateApprox^0 = \state^0$, a global error bound for the approximation of the
state at the $n$-th time step  is given by
\begin{equation} \label{eq:boundBackEuler}
\boxed{\|\staten-\stateApproxn\| \leq
\sum_{k=1}^{n}a^{k}b_{n-k}\leq\sum_{k=1}^{n}a^{k}c_{n-k}\leq\sum_{k=1}^{n}a^{k}d_{n-k},}
 \end{equation} 
where
 \begin{align} \label{eq:cn}
 a &\equiv \sup_{n\in\{1,\ldots,\nt\}}\sup_{\state\neq
 y}\frac{\|\state-y\|}{\|f(\state,\tn;\muonline)-f(y,\tn;\muonline)\|}\nonumber\\
b_n&\equiv \epsilon_\mathrm{Newton} + \| \Rapprox^n(\stateApproxnp;\muonline)\|\nonumber\\
c_n&\equiv \epsilon_\mathrm{Newton} + \|
\proj\Rapprox^n(\stateApproxnp;\muonline)\| +
\|(I-\proj)\Rapprox^n(\stateApproxnp;\muonline)\|\\
 d_n &\equiv \epsilon_\mathrm{Newton} + \|\proj
 \Rapprox^n(\stateApproxnp;\muonline)\| +
 \|\mathsf R^{-1}\|\|\left(I - \mathbb P\right)
 \Rapprox^n(\stateApproxnp;\muonline)\|\nonumber\\
\proj &\equiv \podres\hatpodrespseudo\restrict{}\nonumber\\
\mathbb P&\equiv\podres\podres^T\nonumber\\
\restrict{\podres} &\equiv \mathsf Q\mathsf R \nonumber,
 \end{align} 
where $\mathsf Q\in
\RR{\nin\times \nR}$, $\mathsf R \in \RR{\nR\times\nR}$, and
$\Rapprox^n(\state;\params)=\state-\stateApproxn-\Delta t
 F(\state,\tn;\params)$. 
\end{proposition}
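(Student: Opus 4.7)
The plan is to derive a one-step recursion for the error $e_n \equiv \|\staten - \stateApproxn\|$, unroll it by induction to produce the leftmost bound, and then loosen the right-hand side twice via triangle inequalities and projector identities.

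The starting observation is that both residuals share the same algebraic form: $R^n(\statenp;\muonline) = f(\statenp,\tnp;\muonline) - \staten$ and, analogously, $\Rapprox^n(\stateApproxnp;\muonline) = f(\stateApproxnp,\tnp;\muonline) - \stateApproxn$. Subtracting these two identities and rearranging yields
\begin{equation*}
f(\statenp,\tnp;\muonline) - f(\stateApproxnp,\tnp;\muonline) = R^n(\statenp;\muonline) - \Rapprox^n(\stateApproxnp;\muonline) + (\staten - \stateApproxn).
\end{equation*}
Taking norms, applying the inverse-Lipschitz hypothesis on the left (which by definition of $a$ gives $\|\statenp - \stateApproxnp\| \leq a\|f(\statenp,\tnp;\muonline) - f(\stateApproxnp,\tnp;\muonline)\|$), and using the triangle inequality together with $\|R^n(\statenp;\muonline)\| \leq \epsilon_\mathrm{Newton}$ on the right produces the one-step estimate $e_{n+1} \leq a\,e_n + a\,b_n$. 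Since $e_0 = 0$ by hypothesis, induction collapses this geometric recursion into $e_n \leq \sum_{k=1}^{n} a^k b_{n-k}$, establishing the leftmost bound. The middle inequality $b_n \leq c_n$ is then immediate from the triangle inequality applied to the decomposition $\Rapprox^n(\stateApproxnp;\muonline) = \proj\,\Rapprox^n(\stateApproxnp;\muonline) + (I-\proj)\,\Rapprox^n(\stateApproxnp;\muonline)$.

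Only the rightmost inequality $c_n \leq d_n$ requires real work: it reduces to proving $\|(I-\proj)r\| \leq \|\mathsf R^{-1}\|\,\|(I-\mathbb P)r\|$ for arbitrary $r \in \RR{N}$. The key structural fact is that $\proj = \podres\,\mathsf R^{-1}\mathsf Q^T \restrict{}$ is an oblique projector onto $\mathrm{range}(\podres)$: using the QR identity $\restrict\podres = \mathsf Q\mathsf R$, one checks directly that $\proj\,\podres = \podres$, whence $\proj^2 = \proj$ and $\proj$ acts as the identity on $\mathrm{range}(\podres)$. Because $\mathbb P$ is the orthogonal projector onto the same subspace, $\proj\,\mathbb P = \mathbb P$, which gives the factorization $(I-\proj)r = (I-\proj)(I-\mathbb P)r$. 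Invoking Kato's identity $\|I-\proj\| = \|\proj\|$, valid for any nontrivial projector on a Hilbert space, together with the submultiplicative bound $\|\proj\| \leq \|\podres\|\cdot\|\mathsf R^{-1}\|\cdot\|\mathsf Q^T\|\cdot\|\restrict{}\| \leq \|\mathsf R^{-1}\|$ --- each outer factor being bounded by unity thanks to orthonormality of the columns of $\podres$ and $\mathsf Q$ and of the rows of $\restrict{}$ --- closes the chain.

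The main obstacle is precisely this last step. A naive componentwise splitting cannot produce the factor $\|\mathsf R^{-1}\|$ because the oblique projector $\proj$ does not preserve orthogonal complements. The crucial move is to recognize that $I-\proj$ annihilates $\mathrm{range}(\podres)$ just as $I-\mathbb P$ does, so the comparison can be confined to the orthogonal complement, where Kato's projector identity transfers the sampling-induced bound $\|\proj\| \leq \|\mathsf R^{-1}\|$ cleanly onto $I-\proj$. Everything else --- the geometric-sum induction and the two triangle inequalities --- is routine.
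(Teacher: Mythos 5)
Your proposal is correct and follows essentially the same route as the paper: the same subtraction of the two backward-Euler residuals to get the one-step recursion via the inverse-Lipschitz constant, the same unrolling with $\stateApprox^0=\state^0$, the same triangle inequality for $b_n\leq c_n$, and the same gappy-POD error bound for $c_n\leq d_n$ (your factorization $(I-\proj)=(I-\proj)(I-\mathbb P)$ via $\proj\,\mathbb P=\mathbb P$ is identical in content to the paper's decomposition $g=\mathbb Pg+e$ with $\proj\,\mathbb Pg=\mathbb Pg$, and both invoke $\|I-\proj\|=\|\proj\|$ together with the orthonormality of $\podres$, $\mathsf Q$, and the rows of $\restrict{}$ to reach $\|\mathsf R^{-1}\|$). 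No gaps.
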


\ref{app:EB} provides a proof of the above error bounds. Their consequences include:
 \begin{itemize} 
 \item Justification for the minimum-residual approach taken by the tier $\mathrm{II}$ Petrov--Galerkin ROM. Namely, by computing $\stateApproxnp = \arg\min\limits_{\bar
 \state\in \state^{n+1(0)}+\mathcal Y}\|\Rapprox^n(\bar\state;\muonline)\|$,
 the tier II Petrov--Galerkin ROM selects the element of the trial subspace that
 minimizes $b_n$, $n=1,\ldots,\nt$. This in turn minimizes the tightest error
 bound in \eqref{eq:boundBackEuler}.
 \item Justification for using $\podres = \podjac$ in GNAT (snapshot-collection procedures 0 and 1). In this case, the
 GNAT iterations are equivalent to applying the Gauss--Newton method for
 minimizing $\|\proj \Rapprox^n(\stateApproxnp;\muonline)\|$. As a result, GNAT computes
 $\stateApproxnp = \arg\min\limits_{\bar \state\in \state^{n+1(0)}+\mathcal
 Y}\|\proj\Rapprox^n(\bar\state;\muonline)\|$, which is the element of the trial
 subspace that minimizes the second term of both $c_n$ and $d_n$, $n=1,\ldots,\nt$.
 \item Justification for computing $\podres$ via POD. When computed by POD,
 the basis $\podres$ is the orthogonal basis of dimension $\nR$ that minimizes
 the average projection error over the set of residual snapshots; this
 projection error appears as the last term of $d_n$.
 \item The tightest bound in \eqref{eq:boundBackEuler} is computable by
 the tier $\mathrm {II}$ Petrov--Galerkin ROM if the Lipschitz constant $a$ can be computed or
 estimated.  This is due to the computability of $b_n$: it requires only the
 tolerance $\epsilon_\mathrm{Newton}$ and the residual norm at each time step.
 \item The bound $ \sum\limits_{k=1}^{n}a^{k}c_{n-k}$ (resp.\ $\sum\limits_{k=1}^{n}a^{k}d_{n-k}$) is computable by
 GNAT if the Lipschitz constant $a$ can be computed or estimated and the
 projection error $\|(I-\proj)\Rapprox^n(\stateApproxnp;\muonline)\|$ (resp.\ $\|(I-\mathbb P)\Rapprox^n(\stateApproxnp;\muonline)\|$) can be computed or estimated.
 This is due to the computability of $\|\proj
 \Rapprox^n(\stateApproxnp;\muonline)\|=\|\hatpodrespseudo\restrict{\Rapprox^n(\stateApproxnp;\muonline)}\|$.
 The projection error $\|(I-\proj)\Rapprox^n(\stateApproxnp;\muonline)\|$ can be estimated by
 \begin{align} \label{eq:PE}
   \|(I-\proj)\Rapprox^n(\stateApproxnp;\muonline)\|&\approx\|(\podresp\hatpodrespseudop -
   \podres\hatpodrespseudo)\restrict{\Rapprox^n(\stateApproxnp;\muonline)}\|\\
	  &= \left\|\left(\hatpodrespseudop -
   \left[\begin{array}{c}
	 \hatpodrespseudo \nonumber\\
	 0_{\left(\nRp - \nR\right)\times \nin}
	 \end{array}\right]
	 \right)\restrict{\Rapprox^n(\stateApproxnp;\muonline)}\right\| ,
  \end{align} 
 where $\podresp\equiv \matvec{\phi_R}{\nRp}$ for some $\nRp>\nR$
 (following Ref.\ \citep{drohmannEOI}). Alternatively, the
 projection error $\|(I-\mathbb P)\Rapprox^n(\stateApproxnp;\muonline)\|$ can be
 approximated by the sum of the squares of the singular values neglected by
 $\podres$ (following Ref.\ \citep{chatErr}).
 \end{itemize}

\section{Implementation of online computations and post-processing} \label{sec:implementation}

\subsection{Sample mesh concept}\label{samplemesh} A quick inspection of
Algorithm \ref{alg:onlinegnat} reveals that the online stage of the GNAT
method performs CFD computations in only Step \ref{step:resEval} of this
algorithm. Furthermore,  these computations require, manipulate, and generate
information only at the nodes of the CFD mesh associated with the index sets
$\mathcal I$ and $\mathcal J$ described in
Section~\ref{sec:sysApproxOptimality}. Similarly, an inspection of Algorithm
\ref{postProcess} for output computation reveals that this algorithm requires
online access only to information pertaining to the nodes of the CFD mesh
associated with the index set $\mathcal K$ described in Section
\ref{sec:outputComputation}.  From these two observations, it follows that
both online algorithms can be effectively implemented by constructing in each
case a `sample mesh' tailored to the computations to be performed. This
concept is related to the subgrid idea recently proposed
in~\citep{haasdonkExplicit,drohmann2009reduced}, but differs from it primarily
in the node sampling algorithm as discussed in Section~\ref{sec:USA}.

For the purpose of building and exploiting the GNAT ROM, the sample mesh used
to execute Algorithm \ref{alg:onlinegnat} must
contain only the nodes associated with index sets $\mathcal I$ and
$\mathcal J$, and the geometrical entities (e.g., edges, faces, cells)
associated with these nodes.  For example, consider the case where the flow
solver of interest operates on unstructured tetrahedral meshes and is based on
a second-order finite-volume spatial discretization, where the discrete
unknowns are located at the mesh nodes and the fluxes are computed across the
boundaries of control volumes.  Here, the control volumes (or dual cells) are
constructed by connecting the centroids of the triangular faces and the
midpoints of the edges. The union of these control volumes is often referred
to as the dual CFD mesh. In this case, the sample mesh is constructed by
assembling the nodes associated with the index sets $\mathcal I$ and
$\mathcal J$, and the edges, faces, and cells required to allow the same CFD
solver to perform the computations in Step \ref{step:resEval} of Algorithm
\ref{alg:onlinegnat} as if it were operating on the original CFD mesh.
Figure~\ref{fig:reducedEval} illustrates this case in two dimensions. Note
that although the sample mesh shown in this figure is simply connected, this
property is not required. The reason for this is that the sample mesh has no specific
geometrical meaning and therefore no connectivity requirement aside from that
imposed by the stencil of the flow solver's spatial discretization scheme.

\begin{figure}[htbp]
\centering
\includegraphics[width=10cm]{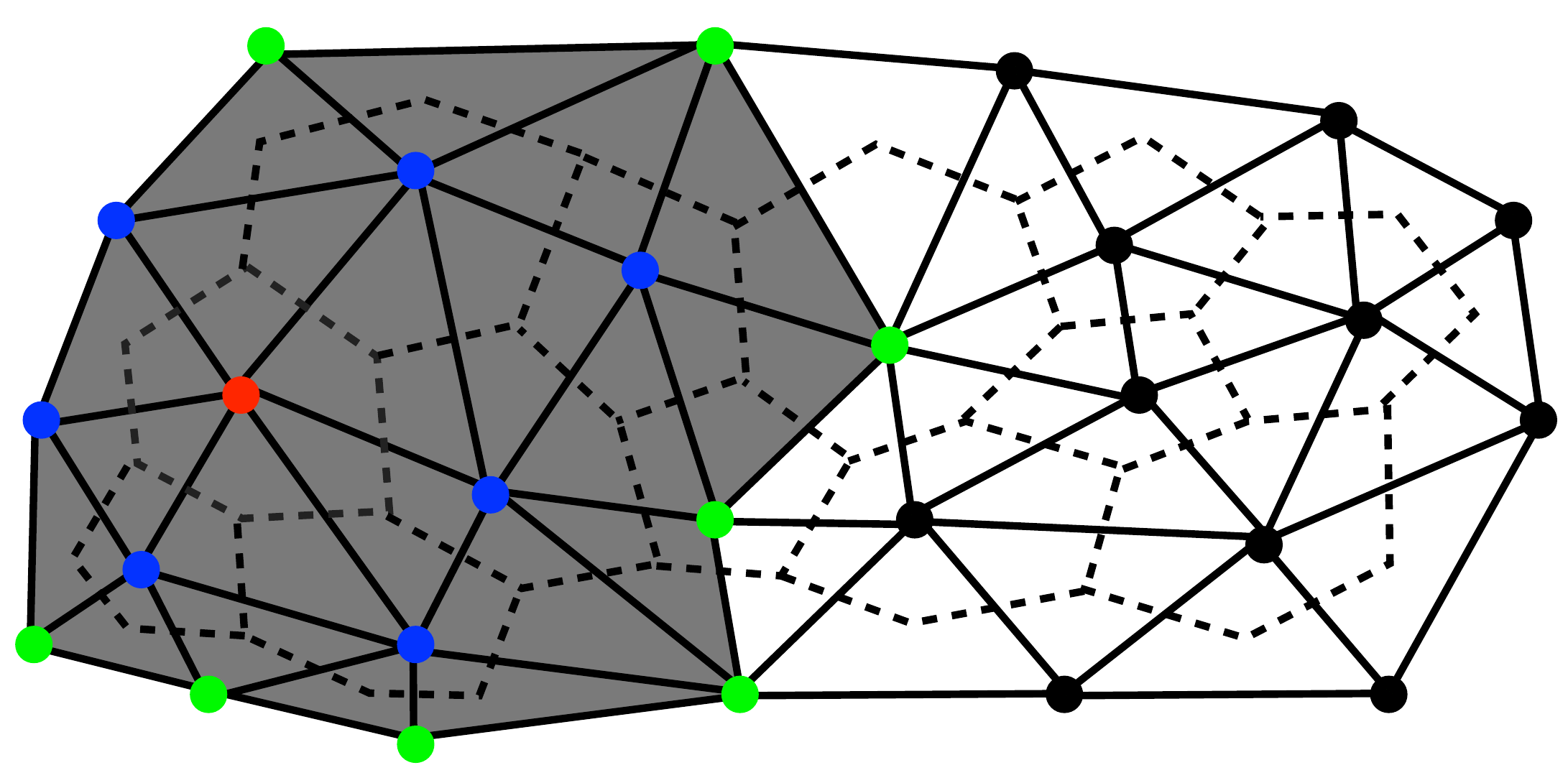}
\caption{Original and sample CFD meshes for online GNAT computations based on a second-order finite volume method operating on a dual mesh.
The original mesh is defined by all triangles, control volumes (dashed lines), edges, and nodes. The sample
mesh is defined by the gray-shaded triangles, associated edges, all control volumes fully
contained within the gray region, and the red, blue, and green nodes.
The residual and its Jacobian are computed at the red node (which defines
$\mathcal I$) , and the fluid state vector is computed at the red, blue, and
green nodes (which together define $\mathcal J$).}
\label{fig:reducedEval}
\end{figure}

The second sample mesh used to execute Algorithm \ref{postProcess} for output
computation must contain only the nodes associated with the index set
$\mathcal K$, and their corresponding geometrical entities. For example, when
the desired outputs are the lift and drag, the sample mesh is the wet surface
mesh --- that is, the collection of faces and nodes lying on the obstacle
around which the flow is computed --- and any other geometrical entity the
flow solver requires to compute the outputs.

As previously mentioned, one major advantage of the sample-mesh approach is
that it allows the same flow solver that was used for offline,
high-dimensional CFD computations to be used for online, low-dimensional
computations.  Consequently, the online ROM computations can be automatically
distributed and parallelized in the same manner as their large-scale CFD
counterparts. The ROM's performance can also be expected to scale (in the weak
sense, or the scaled speedup metric) in a similar manner to that of the
large-scale flow computations using the same CFD solver. However, because its
size is typically a \emph{tiny fraction} of that of the original CFD mesh, the
sample mesh can be expected to require significantly fewer computational cores
and lead to simulations requiring far fewer core-hours than its large-scale
counterpart; Section \ref{sec:ahmedFine} demonstrates this.

Determining the index set $\mathcal K$ is a trivial task. For this reason,
constructing the sample mesh for output computation is also straightforward.
However, determining the sample-index set $\mathcal I$ at which to compute the
residual and its Jacobian is a more delicate matter that is discussed next.

\subsection{Underlying node sampling algorithm}\label{sec:sampleNodeSelection}
\label{sec:USA}

Several algorithms have been proposed in the literature for selecting the
sample indices that define the sample matrix $\restrict{}$.
Usually, these algorithms are tailored to the hyper-reduction procedure they
are designed to support. For example, for the various forms of empirical
interpolation outlined in Section~\ref{sec:intro}, several algorithms for
selecting sample indices have been developed around the objective of
minimizing the error in the interpolated
snapshots~\citep{barrault2004eim,chaturantabut2010journal,haasdonkExplicit},
the difference between the interpolated snapshots and their orthogonal
projections onto the subspace of
approximation~\citep{nguyen2008bpi,galbally2009non}, and the condition number
of the normal-equations matrix used for interpolation or least-squares
approximation \citep{willcox2006ufs, astrid2007mpe}. However, because these
algorithms are sensitive to differences in scale between different
conservation equations, they are not particularly suitable for CFD
applications, as they would lead to a biased treatment of the multiple
conservation equations defined at each mesh node. For this reason,
Ref.~\citep{drohmann2Phase} applied the greedy sampling algorithm adopted in
Refs.~\citep{barrault2004eim,chaturantabut2010journal,haasdonkExplicit} to
each conservation equation separately.  However, this approach causes the
conservation equations to be sampled at different sets of nodes.  This not
only complicates the implementation of online CFD computations, it also leads
to a larger subgrid than necessary and therefore to computationally suboptimal
nonlinear ROM simulations.

Here, Algorithm \ref{greedy} is proposed for determing the sample nodes from a
given CFD mesh, and therefore constructing the sample-index set $\mathcal I$
and sample matrix $\restrict{}$.  This algorithm is based on the greedy method
presented in
Refs.~\citep{barrault2004eim,chaturantabut2010journal,haasdonkExplicit}. This choice
is made because this method attempts to minimize the error
$\|(I-\proj)\Rapprox^n(\stateApproxnp;\muonline)\|$ associated
with the gappy POD projection of the residual, and therefore the third term of
the coefficient $c_n$ \eqref{eq:cn} characterizing the error
bound~\eqref{eq:boundBackEuler}. However, Algorithm \ref{greedy} distinguishes
itself from the method described
in Refs.~\citep{barrault2004eim,chaturantabut2010journal,haasdonkExplicit} in a few
noteworthy aspects. First, it allows for overdetermined least-squares matrices
as opposed to relying on interpolation of the nonlinear function.  Secondly,
it allows different bases to be used to approximate the residual and its Jacobian.
Finally, it operates directly on the mesh nodes instead of the algebraic
indices. Thus, the sample-index set $\mathcal I$ consists of the 
degrees of freedom associated with the nodes in set $\mathcal
N\equiv\{\mathsf n_1, \mathsf n_2,\ldots, \mathsf n_{\nnode}\}$. Because of
this latter minor albeit distinctive feature,  Algorithm \ref{greedy} treats
all conservation equations in a balanced manner, does not lead to a
larger-than-necessary sample mesh, and therefore does not introduce from the outset a
computational inefficiency in the online ROM computations. 

\begin{remark}{\bf 2}
The sample-node set $\mathcal N$ can be seeded with nodes of the CFD mesh that
are deemed important due to their strategic locations.  In particular, it
is essential that at least one sample node lies on the inlet or outlet
boundary of the problem, if such a boundary exists.  It is equally essential
that each input variable $\params_i$, $i=1,\ldots,d$ affects the value of the
residual at at least one sample node.  If the above conditions are not met,
the hyper-reduced GNAT model will be blind to the boundary conditions and inputs
\citep{astrid2004rps}.
\end{remark}

\begin{algorithm}[htbp]
\caption{Greedy algorithm for selecting sample nodes from a given CFD mesh}
\label{greedy}
\begin{algorithmic}[1]
\REQUIRE{$\podres$; $\podjac$; target number of sample nodes $\nnode$; seeded
sample-node set $\mathcal N$ (see {\bf Remark 2}); number of working columns
of $\podres$ and $\podjac$ denoted by $\nbasisgreed \leq \min(\nR, \nJ,
\neqpernode\nnode)$, where $\neqpernode$ denotes the number of unknowns at a
node (for example, $\neqpernode = 5$ for three-dimensional
compressible flows without a turbulence model).}
\ENSURE{sample-node set $\mathcal N$ }
\STATE Compute the additional number of nodes to sample: $\nnodeadd = \nnode -
|\mathcal N|$
\STATE Initialize counter for the number of working basis vectors used:
$\Qtot\leftarrow 0$
\STATE Set the number of greedy iterations to perform: $\niter = \min(\nbasisgreed,\nnodeadd)$ 
\STATE 
Compute the maximum number of right-hand sides in the least-squares problems: $\nrhsmax =\mathrm{ceil}(\nbasisgreed /\nnodeadd)$
\STATE Compute the minimum number of working basis vectors per iteration:
$\nbasisgreedpitmin = \mathrm{floor} (\nbasisgreed/\niter)$
\STATE Compute the minimum number of sample nodes to add per iteration:
$\nnodeaddpitmin = \mathrm{floor} (\nnodeadd\nrhsmax/\nbasisgreed)$
\FOR[greedy iteration loop]{$\iter =1,\ldots,\niter$}
\STATE Compute the number of working basis vectors for this iteration: $\nbasisgreedpit \leftarrow
\nbasisgreedpitmin$; \\ if ($\iter \leq \nbasisgreed\ \mathrm{mod}\
\niter$),
then $\nbasisgreedpit\leftarrow \nbasisgreedpit+1$
\STATE 
Compute the number of sample nodes to add during this iteration: $\nnodeaddpit\leftarrow
\nnodeaddpitmin$;
\\ if $(\nrhsmax =1)$ and  $(\iter \leq \nnodeadd \ \mathrm{mod}\
\nbasisgreed)$, then $\nnodeaddpit\leftarrow \nnodeaddpit+1$ 
\IF{$\iter=1$}
\STATE $\vecmat{\mathsf R}{\nbasisgreedpit} \leftarrow \vecmat{\phi_R}{\nbasisgreedpit}$
\STATE $\vecmat{\mathsf J}{\nbasisgreedpit} \leftarrow \vecmat{\phi_J}{\nbasisgreedpit}$
\ELSE
\FOR[basis vector loop]{$q=1,\ldots, \nbasisgreedpit$}
\STATE\label{step:resRecon} $\mathsf R^q\leftarrow \phi_R^{\Qtot+q}-
\vecmat{\phi_R}{\Qtot}\alpha$, with $\alpha
=\arg\min\limits_{\gamma\in\RR{\Qtot}}\Big\|{\vecmat{\restrict{\phi_R}}{\Qtot}}\gamma -
\restrict{\phi}_R^{\Qtot+q}\Big\|_2$
\STATE\label{step:jacRecon} $\mathsf J^q\leftarrow \phi_J^{\Qtot+q}-
\vecmat{\phi_J}{\Qtot}\beta$, with $\beta
=\arg\min\limits_{\gamma\in\RR{\Qtot}}\Big\|{\vecmat{\restrict\phi_J}{\Qtot}}\gamma -
\restrict{\phi}_J^{\Qtot+q}\Big\|_2$
\ENDFOR

\ENDIF
\FOR[sample node loop]{$j=1,\ldots, \nnodeaddpit$}
\STATE\label{step:greedy1} Choose node with largest average error:
$n\leftarrow \arg\max\limits_{l \notin \mathcal
N}\sum\limits_{q=1}^{\nbasisgreedpit}\left(\sum\limits_{i \in
\dofs{l}}\left((\mathsf R_i^q)^2+ (\mathsf J_i^q)^2\right)\right)$,\\
where $\dofs{l}$ denotes the degrees of freedom associated with node $l$.
\STATE $\mathcal N\leftarrow \mathcal N \cup \{n\}$
\ENDFOR
\STATE $\Qtot\leftarrow \Qtot + \nbasisgreedpit$
\ENDFOR
\end{algorithmic}
\end{algorithm}
\section{Applications}\label{sec:experiments}

To illustrate the ability of GNAT to reduce the dimension and complexity of
highly nonlinear CFD models while maintaining a high level of accuracy, this
section considers two examples.  The first one is an academic problem based on
Burgers' equation. It features a moving shock, and therefore highlights GNAT's
potential for unsteady CFD problems with moving discontinuities. In this
one-dimensional example, GNAT is applied in a prediction scenario --- that is,
for the (most relevant) case where the values of the input variables change
	between the offline training simulations and the online simulation. The
	second example pertains to the computation of the Ahmed body wake
	flow~\citep{ahmed}, which is a well-known CFD benchmark problem in the
	automotive industry. The CFD model employed for this three-dimensional
	problem is characterized by millions of unknowns and therefore incurs
	time-consuming offline computations. For this reason, GNAT is applied in
	this example in reproduction mode only --- that is, for the (preliminary)
	scenario where the online input-variable values are identical to their
	training counterparts.  Nevertheless, this example demonstrates GNAT's
	performance on a realistic, large-scale turbulent flow problem, and
	contrasts it with that of other nonlinear model-reduction methods.

\subsection{Parameterized inviscid Burgers' equation}\label{sec:burgers}

This numerical experiment employs the problem setup described in Ref.~\citep{tpwl}.
Consider the parameterized initial boundary value problem (IBVP)
\begin{align}
\label{eq:burgers}
\frac{\partial U(x,t)}{\partial t} + \frac{1}{2}\frac{\partial \left(U^2\left(x,t\right)\right)}{\partial x} &= 0.02e^{bx}\\
U(0,t) &= a, \ \forall t>0\\
\label{eq:burgersLast}U(x,0) &= 1, \ \forall x\in\left[0,~100\right],
\end{align}
where $a$ and $b$ are two real-valued input variables. This problem is
discretized using Godunov's scheme, which leads to a finite-volume
formulation. The one-dimensional domain is discretized using a grid with 4001
nodes corresponding to coordinates coordinates $x_i = i \times (100/4000)$,
$i=0,\ldots, 4000$.  Hence, the resulting CFD model is of dimension $N=4000$.
The solution $U(x,t)$ is computed in the time interval
$t\in\left[0,~4000\right]$ using a uniform computational time-step size
$\Delta t = 0.05$, leading to $\nt = 1000$ total time steps.  Because there is
only one unknown per node, each sample node corresponds to a single sample
index.

First, a GNAT model is constructed using snapshot-collection procedure 2 (see
Table~\ref{table:snapMethods}) and the following parameters: $\nstate = 50$,
$\nR = 160$, $\nJ = 70$ and $\nin = 160$. It is trained for the solution of
the IBVP~(\ref{eq:burgers})--(\ref{eq:burgersLast}) using the the values of
the boundary-condition parameter $a$ and source-term parameter $b$ reported in
columns 2--4 of Table~\ref{tab:burgersGlobalInputs}. 

\begin{table}[tb] 
	\caption{Offline and online inputs for the
	IBVP~\eqref{eq:burgers}--\eqref{eq:burgersLast}}
  \label{tab:burgersGlobalInputs} 
 \centering 
 \begin{tabular}{||c||c|c|c|c||} 
  \hline 
\multirow{2}{*}{Input variables} & Training input \#1& Training input \#2& Training input \#3& Online input\\
                                    &$\params^1$        &$\params^2$        &$\params^3$        &$\muonline$\\
  \hline 
$a$ & 3 & 6 &9 & 4.5 \\
$b$ & 0.02& 0.05 &0.075 &  0.038\\
  \hline 
  \end{tabular} 
  \end{table} 

Next, the resulting global GNAT ROM is applied online to the solution of the
IBVP~(\ref{eq:burgers})--\ref{eq:burgersLast}) configured with the new values
of $a$ and $b$ shown in column 5 of Table~\ref{tab:burgersGlobalInputs}. A
reference solution for this problem is also computed using the
high-dimensional CFD model. Both solutions are graphically depicted in
Figure~\ref{fig:burgersGlobal} and were computed using a single processor.

The reader can observe that the GNAT prediction closely matches the reference
solution in general. Although oscillations in the GNAT solution are apparent
at $t=2.5$, they dissipate over time.  The relative time-averaged discrepancy
between the GNAT solution and the reference high-dimensional CFD solution as
measured in the Euclidean norm of the state vector is only 1.26\%.  The
high-dimensional CFD solution took 1167 times longer to complete than the
online GNAT solution; this showcases the improved CPU performance delivered by
the GNAT ROM.

\begin{figure}[htbp] \centering
\includegraphics[width=0.6\textwidth]{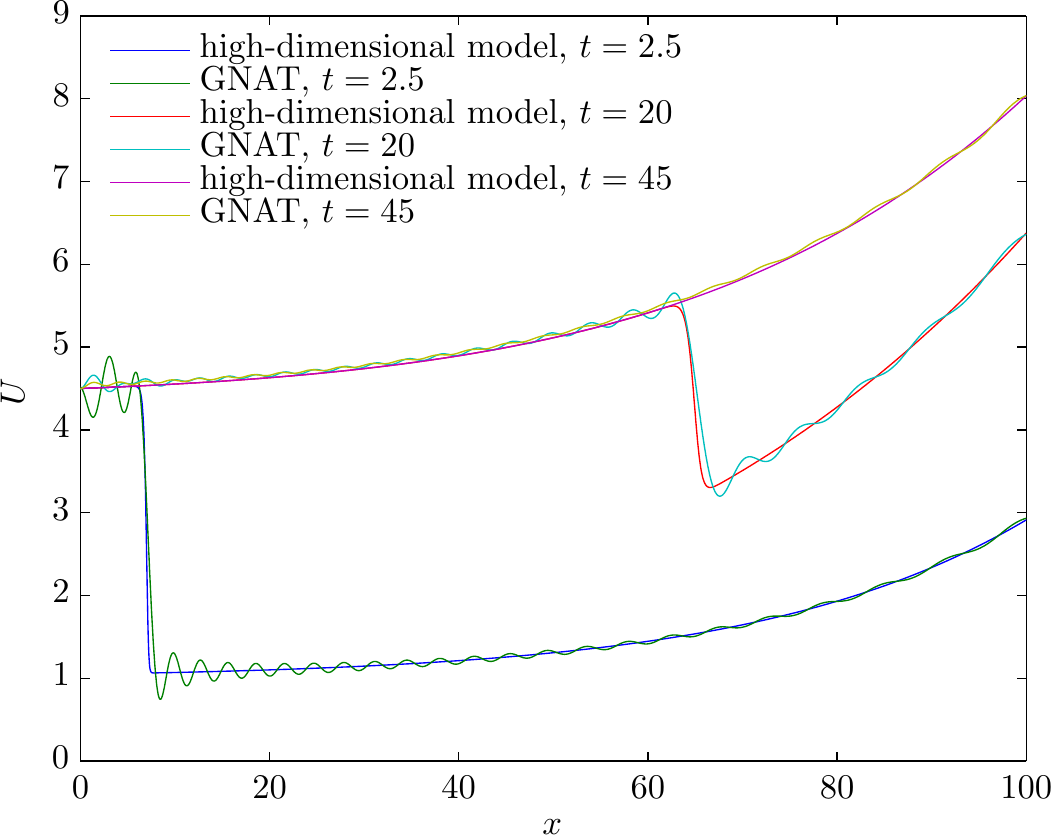} 
\caption{Performance of GNAT in predictive mode for the
IBVP~\eqref{eq:burgers}--\eqref{eq:burgersLast}}
\label{fig:burgersGlobal} \end{figure}

\subsection{Ahmed-body wake flow}\label{sec:ahmedFine}

\subsubsection{Preliminaries}

To assess GNAT's performance on large-scale CFD applications, GNAT is
implemented in the massively parallel compressible-flow solver
AERO-F~\citep{geuzaine2003aeroelastic,farhat2003application}.  For turbulent,
viscous flow computations, this finite-volume CFD code offers various RANS and
LES turbulence models, as well as a wall function.  It performs a second-order
semi-discretization of the convective fluxes using a method based on the Roe,
HLLE, or HLLC upwind scheme. It can also perform second- and fourth-order
explicit and implicit temporal discretizations using a variety of time
integrators. The GNAT implementation in AERO-F is characterized by the
sample-mesh concept described in Section \ref{sec:implementation}.  All linear
least-squares problems and singular value decompositions are computed in
parallel using the ScaLAPACK library \citep{scalapack}.  AERO-F is used here
to demonstrate GNAT's potential when applied to a realistic, large-scale,
nonlinear benchmark CFD problem: turbulent flow around the Ahmed body.

The Ahmed-body geometry~\citep{ahmed} is a simplied car geometry.  It can be
described as a modified parallelepiped featuring round corners at the front
end and a slanted face at the rear end (see Figure~\ref{fig:ahmed}). Depending
on the inclination of this face, different flow characteristics and wake
structure may be observed. For a slant angle $\varphi \ge 30^{\circ}$, the
flow features a large detachment. For smaller slant angles, the flow
reattaches on the slant.  Consequently, the drag coefficient suddenly
decreases when the slant angle is increased beyond its critical value of
$\varphi = 30^{\circ}$. Due to this phenomenon, predicting the flow past the
Ahmed body for varying slant angles has become a popular benchmark in the
automotive industry.

\begin{figure}[btp]
\centering
\includegraphics[width=.8\textwidth]{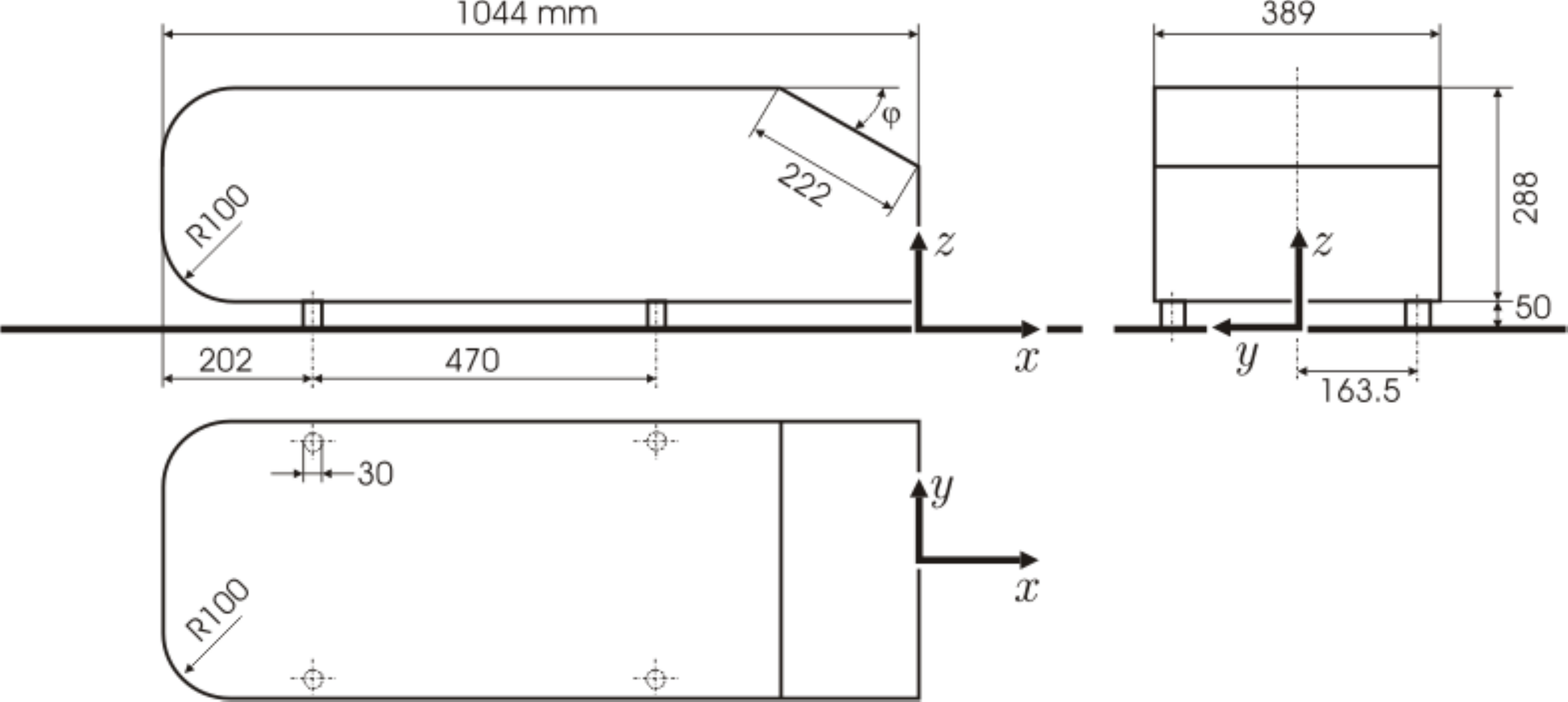}
\caption{Geometry of the Ahmed body (from Ref.~\citep{hinterberger2004large})}
\label{fig:ahmed}
\end{figure}

This work considers the subcritical angle $\varphi = 20^{\circ}$ and treats
the drag coefficient $C_D=\frac{D}{\frac{1}{2}\rho_\infty V_\infty^2
5.6016\times 10^{-2}~\mathrm{m}^2}$ around the body as the output of interest.
The free-stream velocity is set to $V_\infty=60$ m/s, and the Reynolds number
based on a reference length of 1.0 m is set to $\mathrm{Re} = 4.29\times
10^6$.  The free-stream angle of attack is set to $0^{\circ}$.

\subsubsection{High-dimensional CFD model}
The high-dimensional CFD model corresponds to an unsteady Navier--Stokes
simulation using AERO-F's DES turbulence model and wall function.  The fluid
domain is discretized by a mesh with 2,890,434 nodes and 17,017,090 tetrahedra
(Figure~\ref{fig:ahmedFineMesh}). A symmetry plane is employed to exploit the
symmetry of the body about the $x$--$z$ plane.  Due to the turbulence model
and three-dimensional domain, the number of conservation equations per node is
$\neqpernode = 6$, and therefore the dimension of the CFD model is $N =
17,342,604$. Roe's scheme is employed to discretize the convective fluxes; a
linear variation of the solution is assumed within each control volume, which
leads to a second-order space-accurate scheme.

\begin{figure}[htbp]
\centering
\includegraphics[width=.7\textwidth]{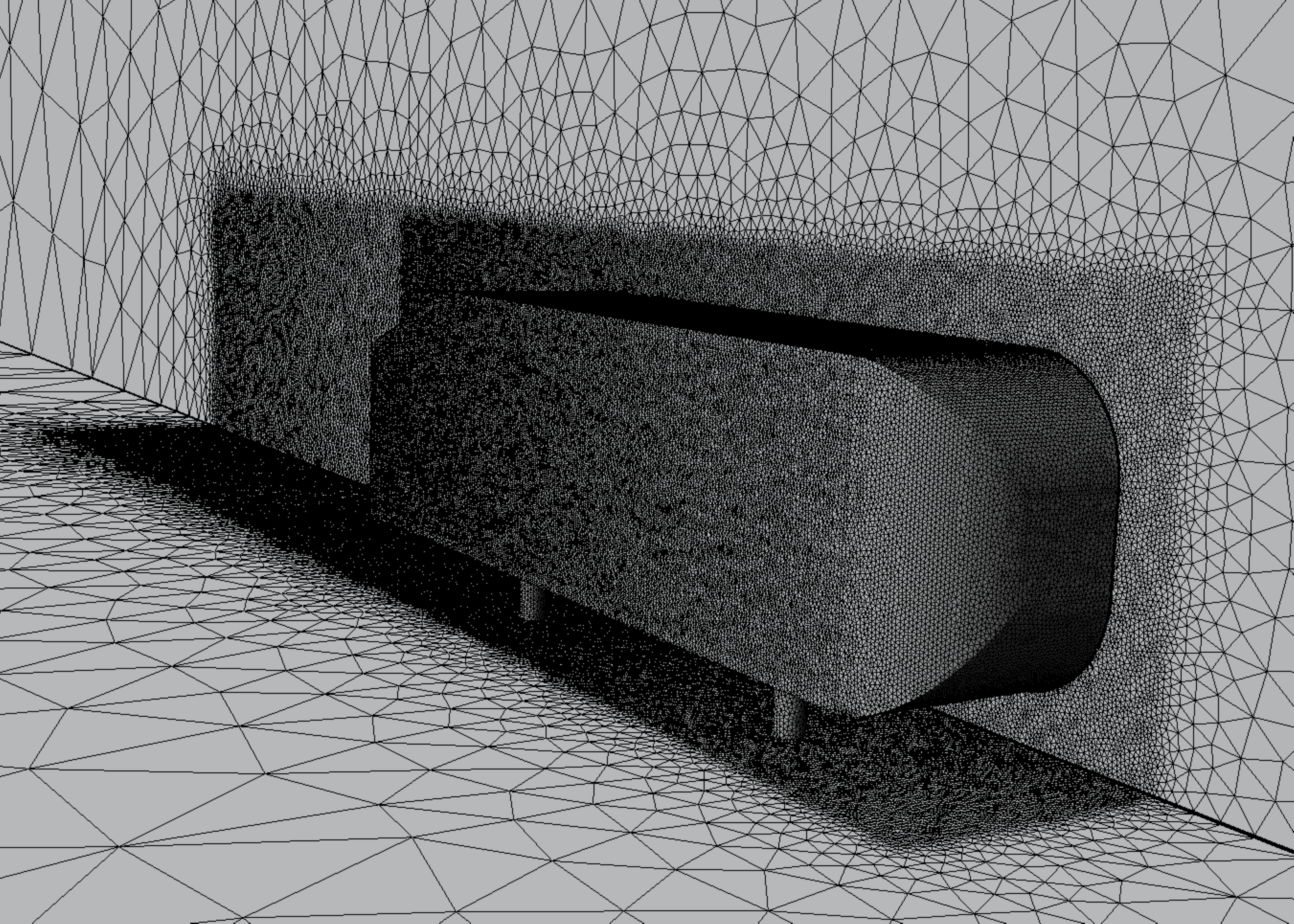}
\caption{CFD mesh with 2,890,434 grid points and 17,017,090 tetrahedra (partial
view,  $\varphi=20^{\circ}$). Darker areas indicate a more refined area of the
mesh.}
\label{fig:ahmedFineMesh}
\end{figure}

Flow simulations are performed within a time interval $t\in\left[0\
\hbox{s},~0.1\ \hbox{s}\right]$, the second-order accurate implicit
three-point backward difference scheme is used for time integration, and the
computational time-step size is fixed to $\Delta t = 8\times 10^{-5}$ s. For the
chosen CFD mesh, this time-step size corresponds to a maximum CFL number of roughly
2000.  The nonlinear system of algebraic equations arising at each time step
is solved by Newton's method. Convergence is declared at the $k$-th iteration
for the $n$-th time step when the residual satisfies $\|R^{n(k)}\| \leq 0.001
\|R^{n(0)}\|$.  All flow computations are performed in a non-dimensional setting.

A steady-state simulation computes the initial condition for the unsteady
simulation. This steady-state calculation is characterized by the same
parameters as above, except that it employs local time stepping with a maximum
CFL number of 50, it uses the first-order implicit backward Euler time
integration scheme, and it employs only one Newton iteration per (pseudo) time
step.

All computations are performed in double-precision
arithmetic on a parallel Linux cluster\footnote{The cluster contains
compute nodes with 16 GB of memory. Each node consists of two quad-core Intel
Xeon E5345 processors running at 2.33 GHz inside a DELL Poweredge 1950. The
interconnect is Cisco DDR InfiniBand.} using a variable number of
cores.  

\subsubsection{Comparison with experiment}\label{sec:validation}

Ref.\ \citep{ahmed} reports an experimental drag coefficient of 0.250 around
the Ahmed body for a slant angle of $\varphi = 20^\circ$. Figure
\ref{fig:fomResponse} reports the time history of the drag coefficient
computed using the high-dimensional CFD model described in the previous
section.  Indeed, the time-averaged value of the computed drag coefficient
obtained using the trapezoidal rule is $C_D = 0.2524$. Hence, it is within
less than 1\% of the reported experimental value.  This asserts the quality of
the constructed CFD model and AERO-F's computations.  For reference, this
high-dimensional CFD simulation consumed $13.28$ hours on 512 cores.

\begin{figure}[tbp] \centering
\includegraphics[width=0.8\textwidth]{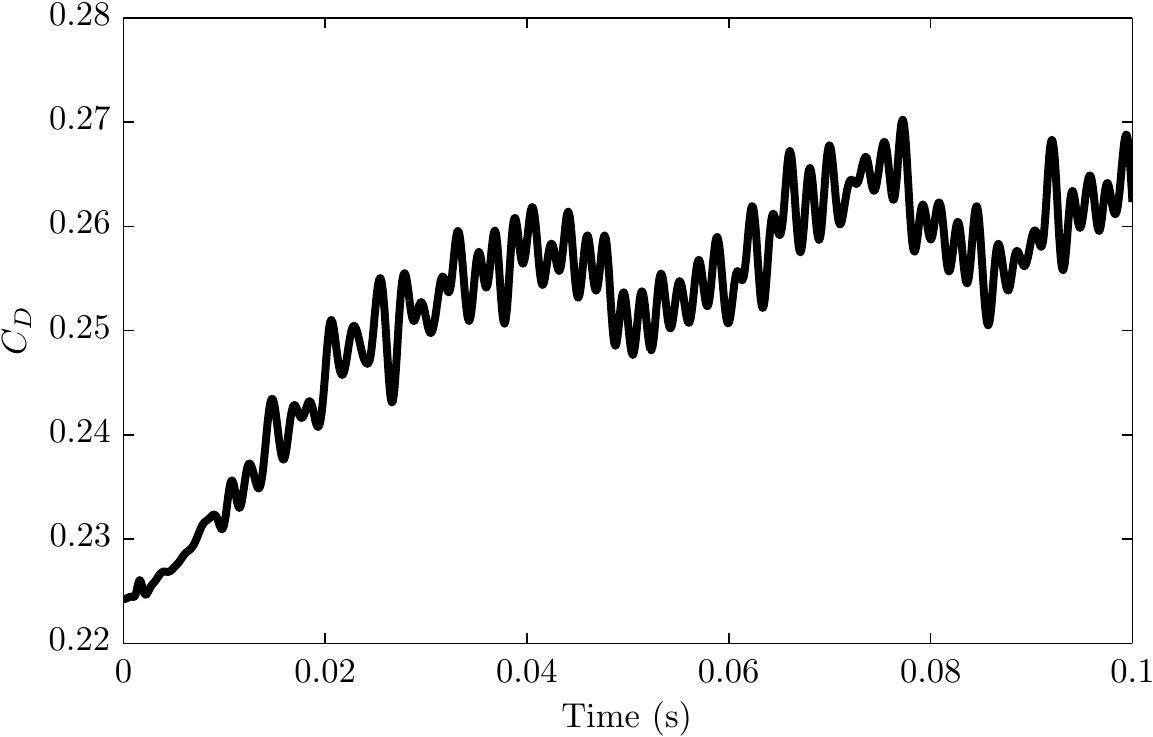} 
\caption{{Time history of the drag coefficient predicted for $\varphi =
20^{\circ}$ using DES and a CFD mesh with $N = 17,342,604$ unknowns. Oscillatory
behavior due to vortex shedding is apparent.}}
\label{fig:fomResponse} \end{figure}

\subsubsection{ROM performance metrics}
The following metrics will be used to assess GNAT's performance. The relative
discrepancy in the drag coefficient, which assesses the accuracy of a GNAT
simulation, is measured as follows:
 \begin{equation} \label{eq:ahmedRelError}
\mathrm{RD} = \frac{\frac{1}{\nt}\sum\limits_{n=1}^{\nt}|{C_D}_{\mathrm{I}}^n-{C_D}_{\mathrm{III}}^n|}{\max\limits_n {C_D}_{\mathrm{I}}^n - \min\limits_n {C_D}_{\mathrm{I}}^n},
 \end{equation} 
where ${C_D}_{\mathrm{I}}^n$ denotes the drag coefficient computed at the
$n$-th time step using the high-dimensional CFD model (tier I model), and
${C_D}_{\mathrm{III}}^n$ denotes the corresponding value computed using the
GNAT ROM (tier III model).

The improvement in CPU performance delivered by GNAT as measured in wall time is defined as 
\begin{equation} \label{eq:compSpeedup}
\mathrm{\WT} = \frac{T_{\mathrm{I}}}{T_{\mathrm{III}}},
\end{equation}
where $T_{\mathrm{I}}$ denotes the wall time consumed by a flow simulation
associated with the high-dimensional CFD model, and $T_{\mathrm{III}}$ denotes
the wall time consumed {\it online} by its counterpart based on a GNAT ROM.
For the high-dimensional model, the reported wall time includes the solution
of the governing equations and the output of the state vector; for the GNAT
reduced-order model, it includes the execution of Algorithm
\ref{alg:onlinegnat}.  After the completion of Algorithm \ref{alg:onlinegnat},
Algorithm \ref{postProcess} is executed to compute the drag coefficient. This
output-computation step employs a sample mesh based on nodes $\mathcal K$
determined from the wet surface; it is characterized by 124,047 nodes and
492,445 tetrahedral cells. For all reduced-order models, Algorithm
\ref{postProcess} consumed 12.2 minutes on 4 cores, or 9.7 minutes on 8 cores.

The improvement in CPU performance delivered by GNAT as measured in computational resources is
defined as
\begin{equation} \label{eq:compSpeedup}
\mathrm{\RS} = \frac{c_{\mathrm{I}}T_{\mathrm{I}}}{c_{\mathrm{III}}T_{\mathrm{III}}},
\end{equation}
where  $c_{\mathrm{I}}$ and $c_{\mathrm{III}}$ denote the number of cores
allocated to the high-dimensional and GNAT-ROM simulations, respectively.

As reported in Section \ref{sec:validation}, the high-dimensional CFD
simulation is characterized by $T_{\mathrm{I}} = 13.28$~hours and
$c_{\mathrm{I}} =512$~cores, which leads to  $c_{\mathrm{I}} T_{\mathrm{I}} =
6,798$~core-hours. 

\subsubsection{GNAT performance assessment}\label{sec:snapStudy}
This section assesses GNAT's performance for two different snapshot-collection
procedures: procedures 0 and 1 of Table \ref{table:snapMethods}.  Recall from
Section~\ref{sec:gapConsistency} that snapshot-collection procedure 0 is
inconsistent in the sense introduced in Ref.~\citep{carlbergGappy} and
restated in Section~\ref{sec:consistency}, but is similar to the approach most
often taken in the literature. Procedure 1 satisfies one consistency
condition. Procedure 2 is not tested because Ref.~\citep{carlbergThesis}
showed that it does not lead to robust reduced-order models; procedure 3 is
not tested due to computational infeasibility.

To build the state POD basis, consistent snapshots
$\{\state^n-\state^0\}_{n=1}^{\nt}$ with $\nt=1252$ are collected during
high-dimensional CFD simulation. Then, these snapshots are normalized to
prevent snapshots with large magnitudes from biasing the SVD.  The dimension of
the state POD basis
is set to $n_\state=283$, which corresponds to 99.99\% of
the total statistical energy of the (normalized) snapshots.\footnote{Numerical experiments reported
in Ref.~\citep{carlbergThesis} determined this to be an appropriate
criterion.}
All numerical studies carried out on the Ahmed body employ this POD
basis for the state.

Algorithm \ref{greedy} is employed to generate two sample meshes: one using
the matrix $\podres=\podjac$ generated by snapshot-collection procedure 0, and
one using $\podres=\podjac$ generated by snapshot-collection procedure
1.\footnote{Residual snapshots are normalized before $\podres$ is computed.}
This algorithm employs the following parameters: $\nbasisgreed= 219$, $\nnode
= 378$, and an initial sample-node set $\mathcal N$ seeded with the boundary
node whose entries of $\phi_R^1$ have the largest sum of squares.
Figure~\ref{fig:snap0snap1meshes} depicts the two resulting sample meshes.
Note that Algorithm~\ref{greedy} chooses sample nodes from three salient regions of
the computational fluid domain: the wake region behind the body, and the
region behind each cylindrical support.  This implies that
on average, the magnitude of the residual is highest in these regions during
the training simulations. This is consistent with the fact that the flow is
separated in these regions and is characterized by a strong vorticity.

\begin{figure}[htbp]
\centering
\subfigure[Sample mesh generated using snapshot-collection procedure 0]{
\includegraphics[width=.45\textwidth]{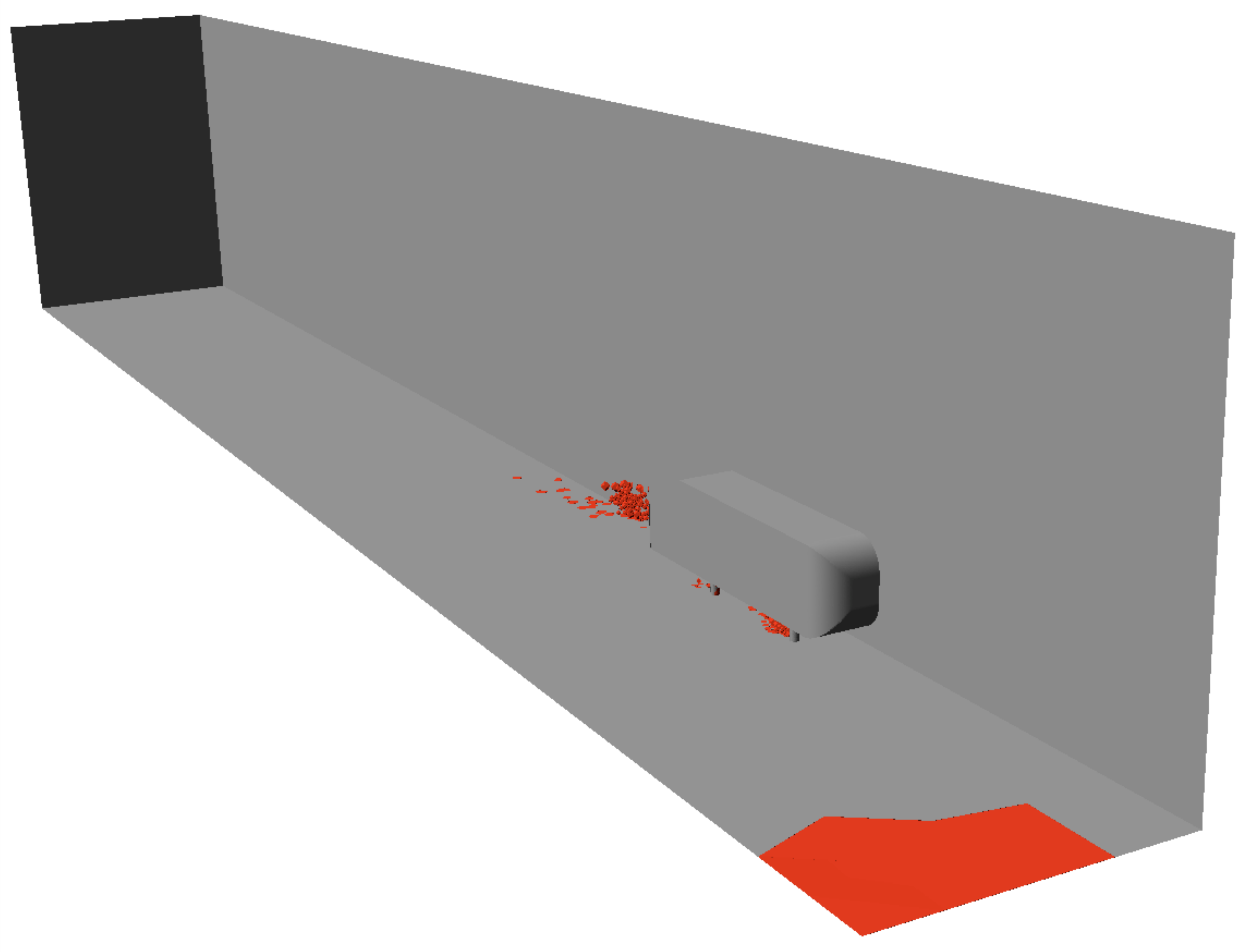}
}
\subfigure[Sample mesh generated using snapshot-collection procedure 1]{
\includegraphics[width=.45\textwidth]{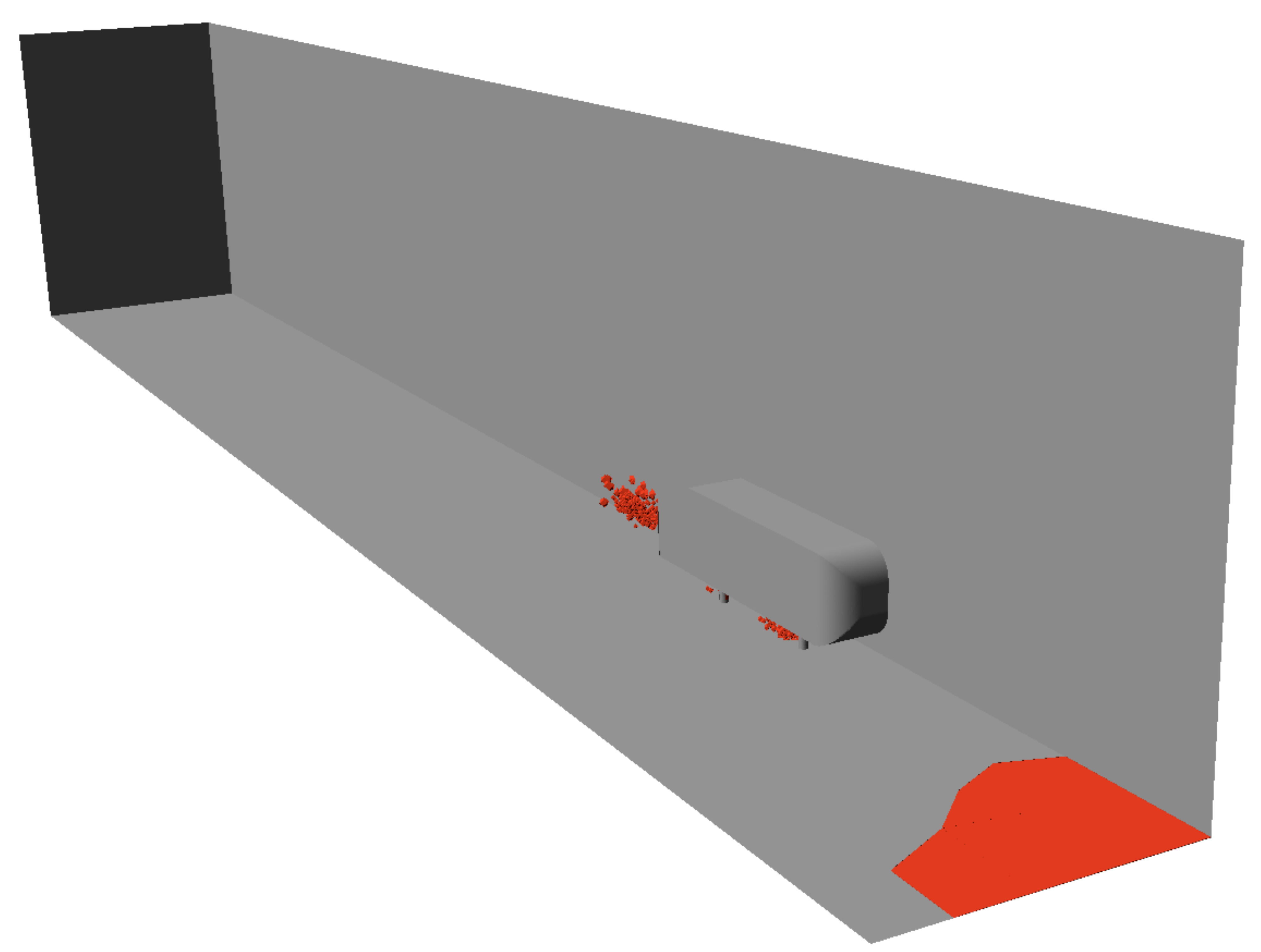}
\label{fig:snap1mesh378}
}
\caption{Sample meshes with 378 sample nodes generated by
Algorithm~\ref{greedy}. Sample meshes are shown in red, within the
computational fluid domain.}
\label{fig:snap0snap1meshes}
\end{figure}

The two GNAT models employ $\nJ=\nR=1514$; this corresponds to 99.99\% of the
energy in the snapshots of the residual collected during the tier
$\mathrm{II}$ ROM simulation.  Both GNAT simulations are executed using only 4
cores (as compared with 512 cores used for the high-dimensional model).  The
GNAT simulations employ the same Ahmed-body configuration and flow conditions
used for the high-dimensional CFD simulation. 

Figure~\ref{fig:snap0snap1} reports the time histories of the drag
coefficient predicted by the high-dimensional simulation and both GNAT ROM
computations. Figure~\ref{fig:pressureCompare} contrasts the surface pressure
contours at $t = 0.1$ s obtained using the high-dimensional model and the
GNAT ROM based on snapshot-collection procedure 1.
Table~\ref{tab:snap0snap1} provides the performance results for the ROM
simulations. These results demonstrate the following:
\begin{itemize} 
\item Both snapshot-collection procedures 0 and 1 lead to GNAT ROMs that
deliver improvement in CPU performance (as measured in computational
resources $\RS$) exceeding 230.  This occurs largely due to the drastic reduction in
cores made possible by the sample-mesh implementation, which allows the ROM
simulation to be executed on as few as 4 cores. In particular, the data
suggest that 438 parametric GNAT ROM simulations (using snapshot-collection
procedure 1) could be executed in a predictive scenario using the same
core-hours required by  a single high-dimensional CFD computation (see Table
\ref{tab:snap0snap1})--- a test
that will be conducted in the future.
\item When equipped with snapshot-collection procedure 1, which satisfies one
consistency condition,
the GNAT ROM reproduces almost perfectly the time history of the drag
coefficient computed by the high-dimensional simulation. On the other hand,
GNAT becomes less accurate
when equipped with snapshot-collection procedure 0, which is inconsistent (see
Figure \ref{fig:snap0snap1}).
Furthermore, GNAT requires fewer Newton iterations per time step for
convergence (and performs faster) when it is equipped with snapshot-collection
procedure 1 compared with 
snapshot-collection procedure 0 (see Table \ref{tab:snap0snap1}). These observations highlight the importance of the
consistency concept introduced during GNAT's development.
\item When equipped with snapshot-collection procedure 1, GNAT
delivers pressure-contour results that are almost identical to those computed
by the high-dimensional simulation, including in the wake region behind the
body where the flow is most complex (see Figure \ref{fig:pressureCompare}). 
\end{itemize}

\begin{figure}[htbp] \centering
\includegraphics[width=0.8\textwidth]{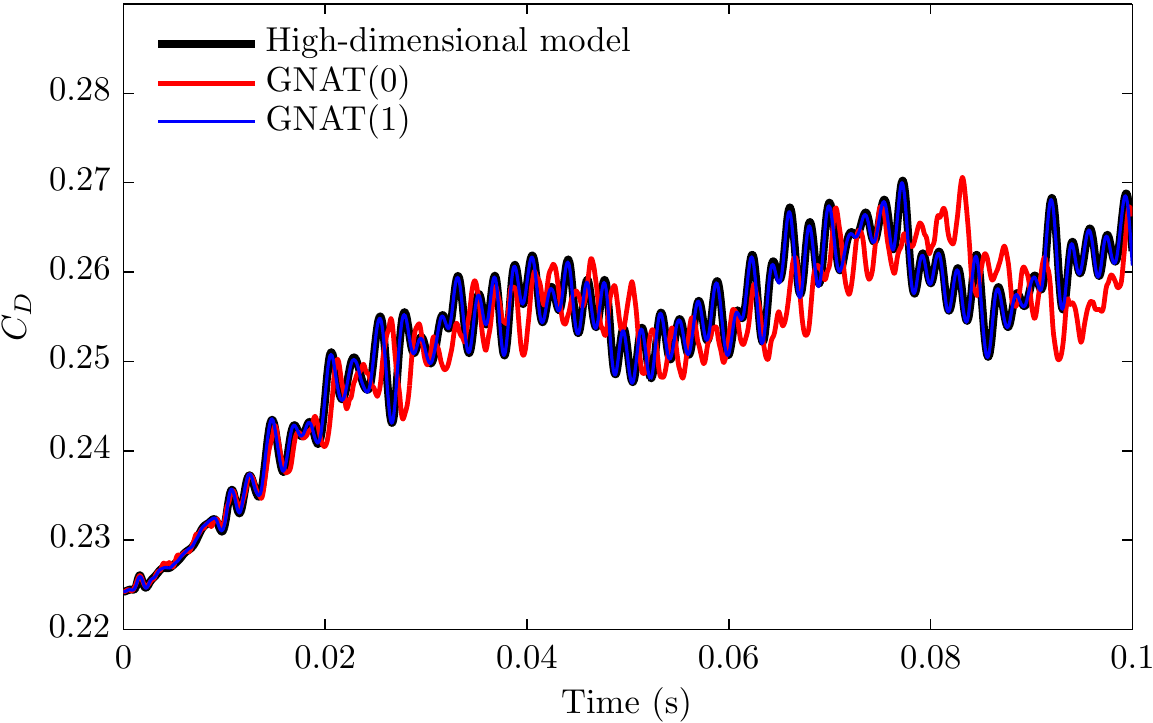} 
\caption{Computed time histories of the drag coefficient (GNAT($i$) refers to
GNAT equipped with snapshot-collection procedure $i$). GNAT(1) directly
overlays the high-dimensional model results.}
\label{fig:snap0snap1} \end{figure}

\begin{figure}[htbp]
\centering
\subfigure[High-dimensional CFD model]{
\includegraphics[width=.45\textwidth]{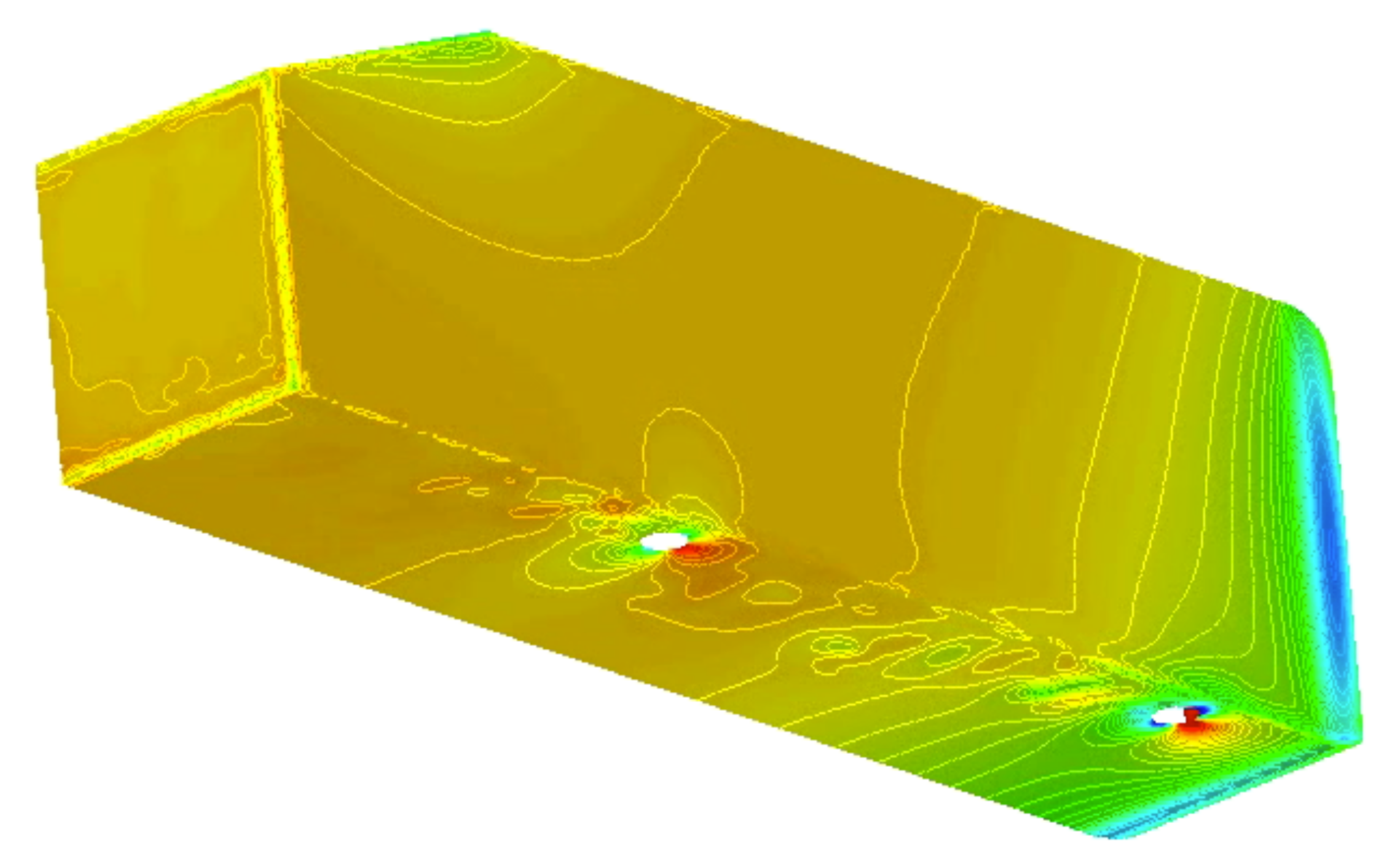}
\label{fig:fomPressure}
}
\subfigure[GNAT ROM equipped with snapshot-collection procedure 1
($\nstate = 283$, $\nR=\nJ = 1514$, and sample mesh with 378 sample nodes)]{
\includegraphics[width=.45\textwidth]{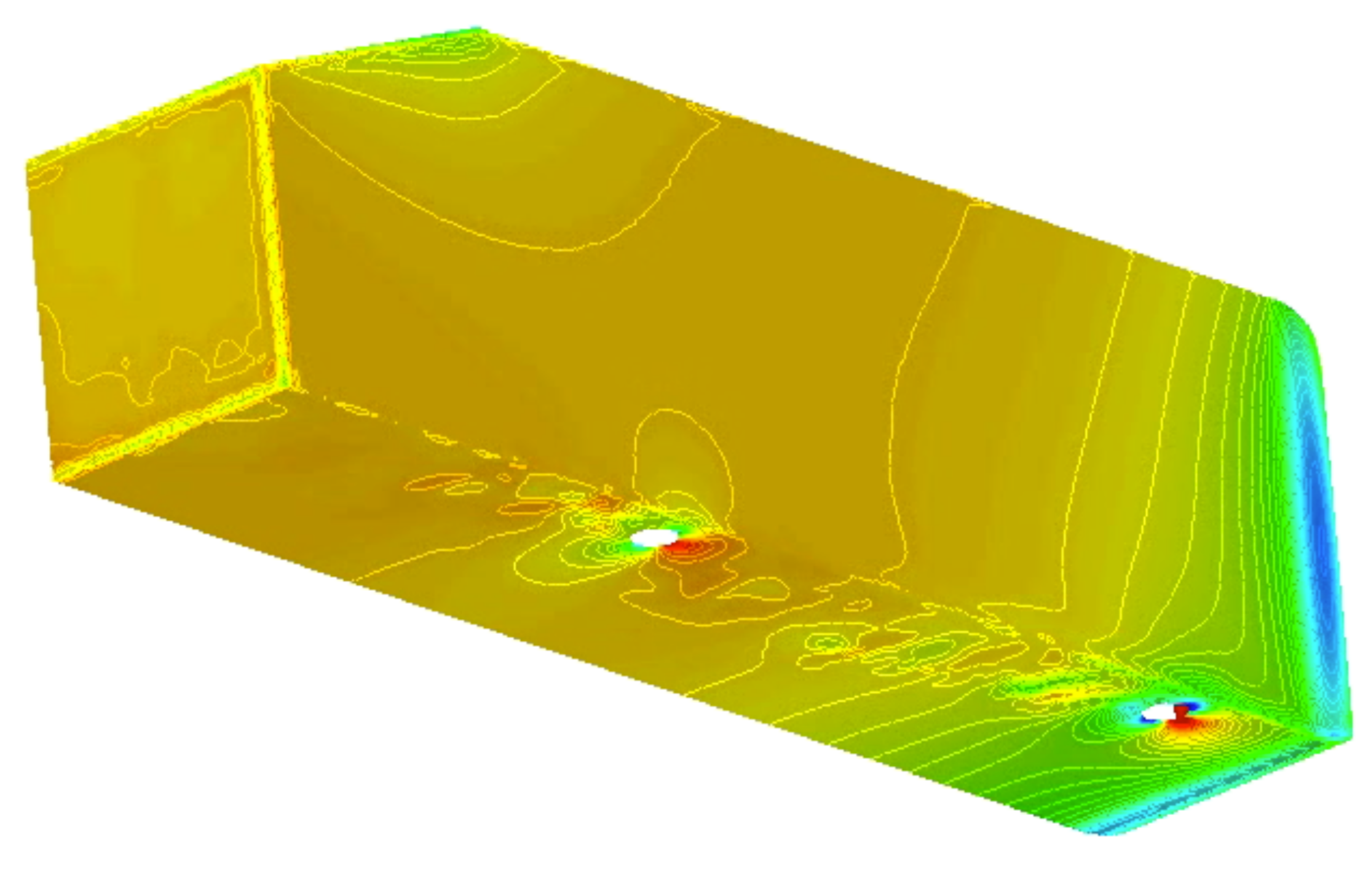}
\label{fig:gapPressure}
}
\caption{Surface-pressure contours at $t = 0.1$ s}
\label{fig:pressureCompare}
\end{figure}

\begin{table}[tb] 
 \caption{Online performance results of GNAT on 4 cores for a sample mesh with
 378 sample nodes}
  \label{tab:snap0snap1} 
 \centering 
 \begin{tabular}{||c||c|c|c|c|c||} 
  \hline 
Snapshot-collection   & \multirow{2}{*}{$\mathrm{RD}$} &
Average \# of Newton    & \multirow{2}{*}{Wall time
(hours)}&\multirow{2}{*}{$\mathrm{\RS}$}&\multirow{2}{*}{$\mathrm{\WT}$}\\ procedure
&               &iterations per time step &          & &
\\ \hline 
0 & 7.43\% & 6.47 & 7.37 &  231 & 1.80  \\
1 & 0.68\% & 2.75 & 3.88 &  438 & 3.42 \\
\hline 
\end{tabular} 
\end{table}

\subsubsection{Effect of node sampling and interpolation vs.\ least-squares approximation}\label{sec:nodeStudy}

To illustrate the effect of the number of sample nodes on GNAT's performance,
this study considers three sample meshes: the sample mesh with 378
sample nodes introduced above (constucted using snapshot-collection procedure
1), a smaller sample mesh with 253 sample nodes,
and a larger one with 505 sample nodes. Algorithm \ref{greedy} is executed to
generate these sample meshes; it employs parameters $\nbasisgreed = 219$ and
$\podres = \podjac$ generated by snapshot-collection procedure
1. Table~\ref{tab:threeSampleMeshes}
reports the characteristics of these sample meshes.  The GNAT models for
these simulations 
are equipped with snapshot-collection procedure 1 and employ
$\nJ=\nR = 1514$ as in the previous section. Because $\neqpernode = 6$, the hyper-reduction
associated with 253 sample nodes corresponds roughly to
interpolation of the residual and its Jacobian. Indeed, the sample-index
factor in this case is $\eta = (253\times 6)/1514 \approx 1$.  For the case of
378 sample nodes, $\eta = 1.5$; the sample mesh with 505 sample nodes is
characterized by $\eta = 2.0$. These latter two cases correspond to
least-squares approximation of the residual and its Jacobian.

\begin{table}[tb] 
\caption{Sample-mesh attributes}
\label{tab:threeSampleMeshes}
\centering 
\begin{tabular}{||c|c|c|c|c||} 
\hline 
\multirow{2}{*}{\# of sample nodes} & \multirow{2}{*}{\# of nodes} &\multirow{2}{*}{\# of elements} & Fraction of nodes    & Fraction of elements\\
                          &                           &                             & of original CFD mesh & of original CFD mesh \\
\hline 
253 & 12808 & 41014 & 0.44\% & 0.24\%  \\
378 & 17096 & 56280 & 0.59\% & 0.33\%  \\
505 & 19822 & 67082 & 0.69\% & 0.39\%  \\
\hline 
\end{tabular} 
\end{table} 

Figure~\ref{fig:sample} reports the time histories of the drag coefficient
obtained using the high-dimensional model and the GNAT ROMs based on these
three sample meshes.  Table~\ref{tab:performanceSample} provides the
performance results for the ROM simulations obtained using 4 cores.
These results indicate the following:
\begin{itemize} 
\item In all cases, GNAT reproduces the time history of the drag coefficient computed using the high-dimensional model with less than 1\% discrepancy.
\item As sample nodes are added, the convergence of Newton's method at each time step improves on average.
\item The fastest performance of GNAT is obtained for the smallest sample mesh.
\item Interpolation of the residual and its Jacobian (253 sample nodes) does
not lead to the best convergence of the Newton solver or the most accurate
results. However, it does lead to the best overall CPU performance of GNAT in
this case.
\end{itemize}

\begin{figure}[htbp] \centering
\includegraphics[width=0.8\textwidth]{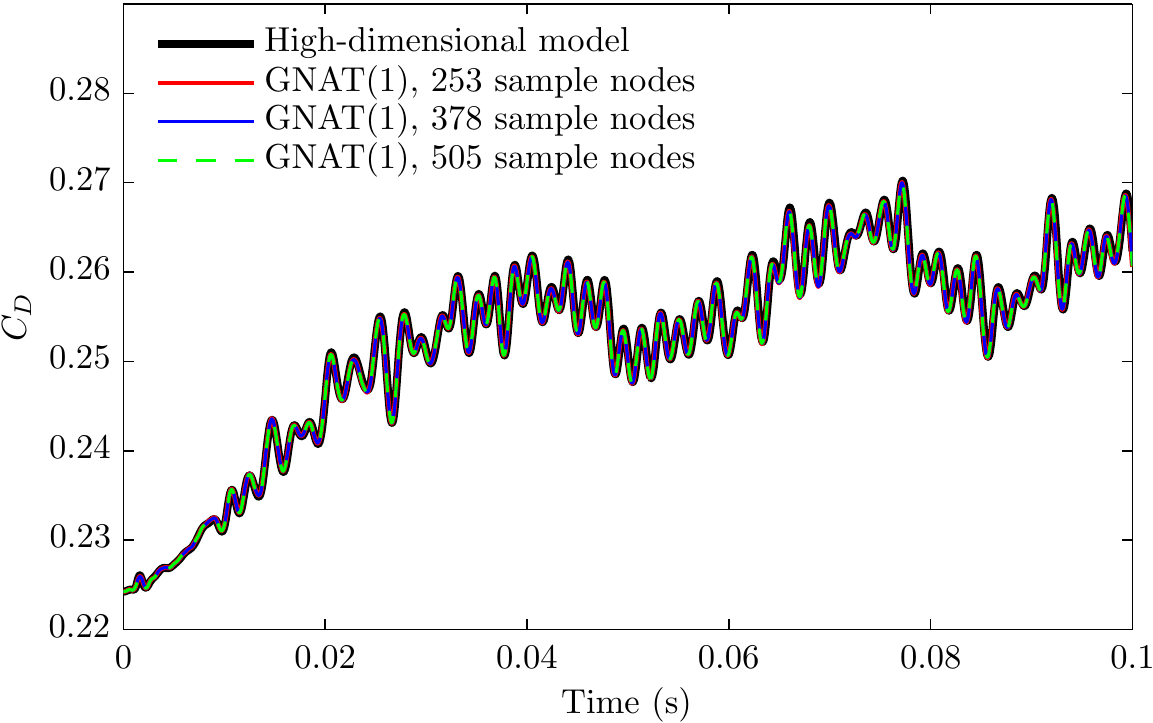} 
\caption{Computed time histories of the drag coefficient for different numbers
of sample nodes (GNAT(1) refers
to GNAT equipped with snapshot-collection procedure 1) }
\label{fig:sample} \end{figure}

\begin{table}[htb] 
\caption{Online performance on 4 cores of GNAT equipped with snapshot-collection procedure 1 for various sample meshes}
\label{tab:performanceSample} 
\centering 
\begin{tabular}{||c|c||c|c|c|c|c||} 
\hline 
\multirow{2}{*}{\# of sample
nodes}&\multirow{2}{*}{$\eta$}&\multirow{2}{*}{$\mathrm{RD}$}&Average \# of
Newton     &\multirow{2}{*}{Wall time (hours)}&\multirow{2}{*}{$\mathrm{\RS}$}
& \multirow{2}{*}{$\mathrm{\WT}$}\\
                                   &                       &
																	 &iterations per time step &
																	 &             &\\
\hline 
253 & $\approx 1$ & 0.79\% & 4.38 &3.77 &  452&  3.52\\
378 & 1.5         & 0.68\% & 2.75 &3.88 &  438&  3.42\\
505 & 2.0         & 0.75\% & 2.25 &4.22 &  403&  3.15\\
\hline 
\end{tabular} 
\end{table} 

\subsubsection{Parallel scalability}\label{sec:gnatParallel}

Due to the sample mesh concept, GNAT is parallelized in the same manner as a
typical CFD code is, using mesh partitioning.  However, because GNAT operates
on a dramatically smaller mesh, its parallel performance cannot be expected to
scale in the strong sense --- that is, for a fixed ROM size and an increasing
number of processors. This is also true for the online stage of any other
model-reduction method.

To obtain an idea of the strong scaling that can be expected from a nonlinear
model-reduction method, Table~\ref{tab:timing} reports the CPU performance
results obtained for GNAT equipped with snapshot-collection procedure 1, the
sample mesh with 378 sample nodes, and $\nJ=\nR = 1514$. Excellent speedups
are obtained for a number of cores varying between 2 and 8, a good speedup is
obtained for 12 cores, and a reasonable one is obtained for 16 cores. For a
larger number of cores, the parallel efficiency (defined as the ratio of the
speedup to the number of cores) increasingly deteriorates.  This is not
surprising given that the GNAT ROM operates on a mesh with only
378 sample nodes.

\begin{table}[tb] 
\caption{Assessment of GNAT's strong scaling performance for a sample mesh
with 378 sample nodes}
\label{tab:timing} 
\centering 
\begin{tabular}{||c|c|c||c|c||} 
\hline 
\# of cores  & Wall time (hours)& Speedup & $\WT$ & $\RS$\\
\hline 
1  & 16.1 & 1.0 &0.83 &422\\
2  & 8.74 & 1.84 & 1.52&388\\
4  & 3.88 & 4.14$^\star$ & 3.43&438\\
8  & 2.50 & 6.44 &5.32 &340\\
12 & 1.94 & 8.25 & 6.86&292\\
16 & 2.08 & 7.74 & 6.39&204\\
\hline 
\end{tabular} 
\\ \small{$^\star$This superlinear speedup is likely due to caching and other
memory management effects.}
\end{table}

\subsubsection{Performance comparison with other function-sampling ROM methods}
\label{sec:fineCompare}
To conclude this section, the performance of GNAT equipped with snapshot-collection procedure 1 and $\nJ=\nR = 1514$
is compared to that of other hyper-reduction techniques based on function
sampling. This study employs the same wake flow problem, the same state POD
basis of dimension $\nstate = 283$, and same sample mesh with 378 sample
nodes. The following function-sampling techniques are compared with GNAT:
\begin{enumerate} 
\item A collocation of the nonlinear equations followed by a Galerkin
projection of the resulting over-determined system of 2268 nonlinear
equations (378 sample nodes $\times$ 6 equations per node) with $\nstate=283$
unknowns \citep{astrid2007mpe,ryckelynck2005phm}.
\item A collocation followed by a least-squares solution of the resulting over-determined
system \citep{LeGresleyThesis}.
\item A discrete empirical interpolation method (DEIM)-like
~\citep{chaturantabut2010journal} approach that employs snapshot-collection
procedure 0 and $\nR=\nJ=2268$ so that the
residual and Jacobian functions are approximated by interpolation.
The tested approach employs the
 tier II Petrov--Galerkin solution of the overdetermined equations as opposed to the
 Galerkin projection; this is done to isolate the effect of the
 hyper-reduction technique on performance.
\end{enumerate}

Figure~\ref{fig:allMethods} reports the time histories of the drag-coefficient
computed using all hyper-reduction techniques outlined above.  Both
collocation approaches lead to nonlinear instabilities after a few time steps
of the flow simulation, thereby exposing the weakness of collocation for
highly nonlinear problems.  The DEIM-like approach, which  employs the popular
but inconsistent snapshot-collection procedure 0, also performs poorly. For
this approach, the Newton iterations begin to generate zero search directions
after only a few time steps of the flow simulation. 

\begin{figure}[htbp] \centering
\includegraphics[width=0.8\textwidth]{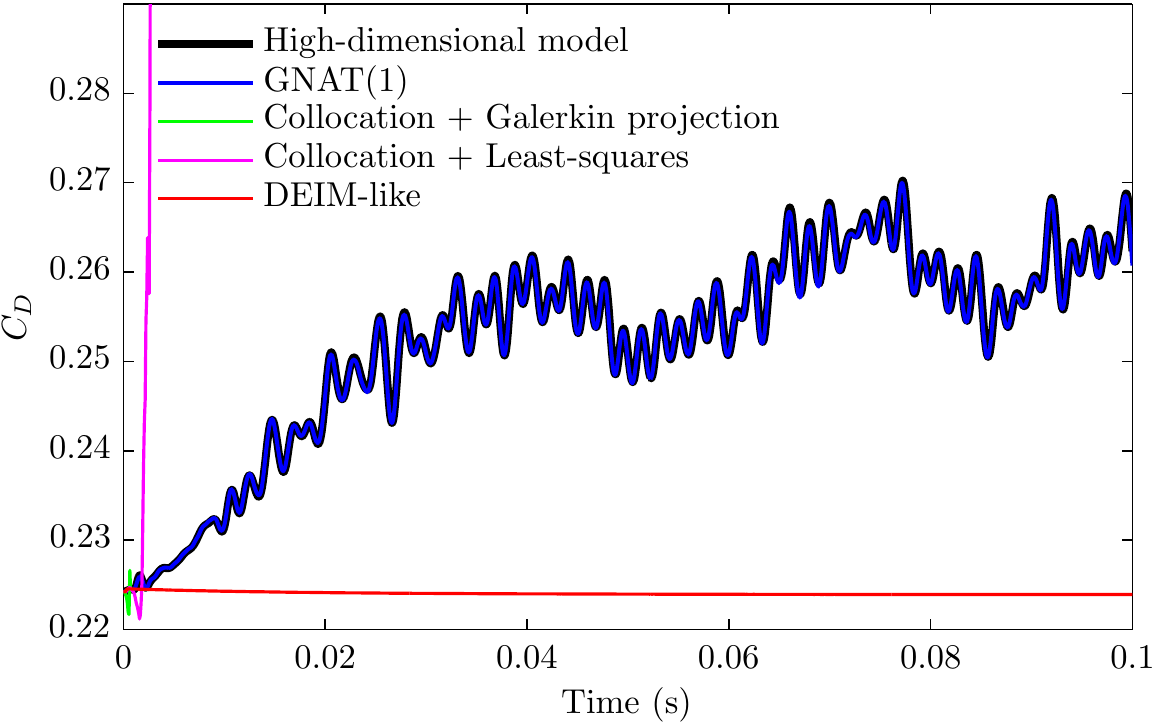} 
\caption{Computed time histories of the drag coefficient
(GNAT(1) refers
to GNAT equipped with snapshot-collection procedure 1)}
\label{fig:allMethods} \end{figure}

\section{Conclusions}
\label{sec:Conc}

In this work, the Gauss--Newton with approximated tensors (GNAT) nonlinear
model-reduction method is equipped with a sample-mesh concept that eases the
implementation of its online stage on parallel computing platforms.  This work
also develops global state-space error bounds that justify GNAT's design,
characterize its mathematical properties, and highlight its advantages in
terms of minimizing components of these bounds.  The effectiveness of GNAT on
parametric problems and its robustness for highly nonlinear computational
fluid dynamics (CFD) applications characterized by moving shocks is
demonstrated by the solution of a conservation problem described by the
inviscid Burgers' equation with a variable source term and boundary condition.
GNAT's ability to reduce by orders of magnitude the core-hours required to
compute turbulent viscous flows at high Reynolds numbers, while preserving
accuracy, is demonstrated with the simulation of the flow field in the wake of
the Ahmed body.  For this popular benchmark problem with over 17 million
unknowns, GNAT is found to outperform several other nonlinear model reduction
methods, reduce the required computational resources by more than two orders
of magnitude, and deliver a solution with less than 1\% discrepancy compared
to its high-dimensional counterpart.

\appendix

\section{Proof of consistent snapshots for the state POD basis}\label{app:snapshotConsistency}
The following proposition proves that two options for collecting state
snapshots lead to a consistent projection. For the sake of simplicity, one set
of training inputs $\paramtrain$ is considered and therefore $\mathcal
D_{\mathrm{train}} = \{\paramtrain\}$. Recall the reduced-order-model
solution is defined by Eq.~\eqref{eq:yApprox} as
\begin{equation}
\label{eq:GNupdate}\tilde\state^{n}(\params)=\initialCond(\params)
+\podstate\deltaStatecoords^{n}(\params),\quad n=0,\ldots,\nt.
\end{equation}

\begin{propos} \label{propos:projconsistent}
\emph{Consistency of the state snapshots.} Assume the following:
 \begin{enumerate} [1.]
	\item\label{ass:snap} The set of snapshots $\snapshotSet{1}$ or $\snapshotSet{2}$ is used to compute $\podstate$ via
	POD, where 
	\begin{align} \label{eq:newConsistentSnapshots} 
	\snapshotSet{1} &\equiv\{\state^n(\paramtrain)-\state^{0}(\paramtrain)\ |\ 
	n=1,\ldots,\nt\}\\
	\snapshotSet{2} &\equiv\{\state^n(\paramtrain)-\state^{n-1}(\paramtrain)\ |\
	n=1,\ldots,\nt\}.
  \end{align} 
\item\label{ass:GN}
The Gauss--Newton method is employed to compute solutions
$\deltaStatecoords^{n+1}(\params)$ to the nonlinear
least-squares problem
\begin{align}\label{eq:GNPGapp}
\deltaStatecoords^{n+1}(\params) &= \arg\min_{ y\in
\RR{\nstate}} 
f^n(y;\params),
\end{align}
for $n=0,\ldots,\nt-1$ and any $\params\in\mathcal D$. Here,
\begin{equation} 
f^n(y;\params)\equiv\frac{1}{2}\|\tilde R^n(\initialCond(\params)+\podstate
y;\params)\|_2^2.
\end{equation} 
The residual $\tilde R^n$ arising from the sequence of reduced-order-model solutions is related to the
residual $R^n$ arising from the sequence of high-dimensional-model solutions as
follows:
\begin{align} 
\tilde R^n(\state;\params) &\equiv S^n(\state,\tilde \state^n,\ldots,\tilde
\state^1,\state^0;\params)\\
S^n(\state,\state^n,\ldots,\state^1,\state^0;\params)&\equiv R^n(\state;\params),
\end{align} 
for $n=0,\ldots,\nt-1$ and any $\params\in\mathcal D$. Here, $S^n$ explicitly
reflects the dependence of the residual on the state at previous time steps.
\item \label{ass:ic}The reduced-order model employs the same initial condition
as the high-dimensional model:
\begin{equation}
\deltaStatecoords^{0}(\params)=0.
\end{equation}
\item \label{ass:gaussNewton} The standard assumptions (see Theorem 10.1
\citep{NocedalWright}) needed for the convergence of the Gauss--Newton iterations to a stationary point contained in the level set $\lnparams$:
 \begin{equation} 
\deltaStatecoords^{n+1}(\params)\in\stationparams, 
\end{equation}
for $n=0,\ldots,\nt-1$  and any $\params\in\mathcal D$. Here, define
\begin{gather} 
\lnparams \equiv \{y\ |\ f^n(y;\params)\leq
f^n\left(\deltaStatecoords^{n+1(0)};\params\right)\}\\
\stationparams \equiv \{y\ |\ y\in\lnparams,\ \nabla
f^n(y;\params)=0\}.
\end{gather} 
\item \label{ass:oneStationary}The level set $\lnparams$ contains only one stationary
point: $|\stationparams| = 1$
for $n=0,\ldots,\nt-1$  and any $\params\in\mathcal D$.
\end{enumerate}
Then, the projection approximation is consistent in the sense that the
Petrov--Galerkin ROM (i.e., GNAT without hyper-reduction) associated with a
POD basis $\podstate$ that is not truncated computes the same states as the
original high-dimensional CFD model for the training inputs --- that is,
\begin{equation} \label{eq:consistentSnapsResult}
\tilde \state^{n}(\paramtrain)=
\state^{n}(\paramtrain), \quad n=0,\ldots,\nt.
\end{equation} 
\end{propos}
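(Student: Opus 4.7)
The plan is to prove the result by induction on the time-step index $n$, using the fact that an untruncated POD basis spans the entire snapshot subspace, so the true high-dimensional solution lies exactly in the affine trial space $\initialCond(\paramtrain) + \Range(\podstate)$ at each time step.

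For the base case $n=0$, assumption \ref{ass:ic} directly gives $\tilde\state^0(\paramtrain) = \initialCond(\paramtrain) + \podstate \cdot 0 = \state^0(\paramtrain)$. For the inductive step, suppose $\tilde\state^k(\paramtrain) = \state^k(\paramtrain)$ for $k=0,\ldots,n$, and consider time step $n+1$. The first key step is to observe that the true increment $\state^{n+1}(\paramtrain) - \state^0(\paramtrain)$ lies in $\Range(\podstate)$ for either snapshot-collection option: for $\snapshotSet{1}$ this holds because $\state^{n+1}(\paramtrain) - \state^0(\paramtrain)$ is itself a snapshot, and for $\snapshotSet{2}$ it holds by telescoping, $\state^{n+1}(\paramtrain) - \state^0(\paramtrain) = \sum_{k=1}^{n+1}(\state^k(\paramtrain)-\state^{k-1}(\paramtrain))$, which is a finite sum of snapshots. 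Since the untruncated POD basis spans the snapshot set, there exists $y^\star \in \RR{\nstate}$ with $\podstate y^\star = \state^{n+1}(\paramtrain) - \state^0(\paramtrain)$.

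Next, I will show that this $y^\star$ is a global minimizer of $f^n(\cdot;\paramtrain)$. By the inductive hypothesis $\tilde\state^k(\paramtrain) = \state^k(\paramtrain)$ for $k\le n$, together with the relationship between $\tilde R^n$ and $S^n$ in assumption \ref{ass:GN}, one has
\begin{equation*}
\tilde R^n(\state;\paramtrain) = S^n(\state,\tilde\state^n(\paramtrain),\ldots,\tilde\state^1(\paramtrain),\state^0(\paramtrain);\paramtrain) = S^n(\state,\state^n(\paramtrain),\ldots,\state^1(\paramtrain),\state^0(\paramtrain);\paramtrain) = R^n(\state;\paramtrain)
\end{equation*}
for every $\state$. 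Evaluating at $\state = \initialCond(\paramtrain) + \podstate y^\star = \state^{n+1}(\paramtrain)$ and using the fact that $R^n(\state^{n+1}(\paramtrain);\paramtrain)=0$ by definition of the high-dimensional solution, we get $f^n(y^\star;\paramtrain) = \tfrac{1}{2}\|\tilde R^n(\state^{n+1}(\paramtrain);\paramtrain)\|_2^2 = 0$. Since $f^n \ge 0$, $y^\star$ is a global minimizer, hence a stationary point; moreover $y^\star \in \lnparams$ because $f^n(y^\star;\paramtrain)=0 \le f^n(\deltaStatecoords^{n+1(0)};\paramtrain)$.

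Finally, by assumption \ref{ass:oneStationary} the level set $\lnparams$ contains a unique stationary point, and by assumption \ref{ass:gaussNewton} the Gauss--Newton iterations converge to that stationary point, giving $\deltaStatecoords^{n+1}(\paramtrain) = y^\star$ and therefore $\tilde\state^{n+1}(\paramtrain) = \initialCond(\paramtrain) + \podstate y^\star = \state^{n+1}(\paramtrain)$, closing the induction. The main subtlety, and the only place where the two snapshot-collection procedures require different arguments, is the initial containment statement: procedure 1 makes it immediate, while procedure 2 needs the telescoping identity and therefore implicitly requires the cumulative increment to be expressible as a linear combination of consecutive-step snapshots -- which is automatic but worth stating carefully. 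Beyond that, the argument is essentially bookkeeping: propagating the inductive hypothesis into the definition of $\tilde R^n$ and invoking uniqueness of the stationary point.
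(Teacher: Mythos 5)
Your proof is correct and follows essentially the same route as the paper's: induction on the time step, establishing $\state^{n+1}(\paramtrain)-\state^{0}(\paramtrain)\in\Range{\podstate}$ (directly for $\snapshotSet{1}$, by telescoping for $\snapshotSet{2}$), showing the corresponding generalized coordinate is a zero of $f^n$ via the induction hypothesis and $R^n(\state^{n+1})=0$, and then invoking uniqueness of the stationary point in the level set. The only cosmetic difference is that the paper takes the explicit representative $y=\podstate^T(\state^{n+1}-\state^{0})$ and uses orthonormality of the POD basis, whereas you work with an abstract preimage $y^\star$; the two are equivalent.
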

\begin{proof}
Consider computing solutions $\tilde\state^n(\paramtrain)$, $n=0,\ldots,\nt$
under the stated assumptions. In the sequel, the argument $\paramtrain$ is
dropped for notational simplicity. The result, i.e., Eq.\
\eqref{eq:consistentSnapsResult}, is proven by induction. It is true for $n=0$
due to Assumption \ref{ass:ic}.
Assume now that $\tilde \state^i = \state^i$, $i=0,\ldots,n$. 

Assumption \ref{ass:GN} ensures that the
Gauss--Newton method is used to compute the solution
$\deltaStatecoords^{n+1}$. Assumption \ref{ass:gaussNewton} guarantees
that these Gauss--Newton iterations will converge to a local stationary point in
the level set $\levelsetTs{n}$. Therefore,
 \begin{equation} \label{eq:converge1}
	\deltaStatecoords^{n+1}\in\stationaryTs{n}.
  \end{equation} 

Assumption \ref{ass:snap} ensures that
\begin{equation} \label{eq:discrepInSpan0}
\state^{n+1}-\state^{0}\in\Range{\podstate}.
\end{equation} 
To see this, first consider the case where $\snapshotSet{1}$ is used to compute
$\podstate$. Then, $\state^{n+1}(\paramtrain)-\state^{0}(\paramtrain)\in\snapshotSet{1}$ 
and therefore
$\state^{n+1}(\paramtrain)-\state^{0}(\paramtrain)\in\Span{\snapshotSet{1}}$.
If the POD basis $\podstate$ is not truncated, then
$\Range{\podstate}=\Span{\snapshotSet{1}}$ and Eq.\
\eqref{eq:discrepInSpan0} holds.
Now, consider the case where $\snapshotSet{2}$ is used for computing $\podstate$. 
Because
$\state^{i+1}(\paramtrain)-\state^{i}(\paramtrain)\in\snapshotSet{2}$,
$i=0,\ldots,\nt$, then
$\state^{n+1}(\paramtrain)-\state^{0}(\paramtrain)\in\Span{\snapshotSet{2}}$. 
If the POD basis $\podstate$ is not truncated, then
 $\Range{\podstate}=\Span{\snapshotSet{2}}$, and again Eq.\ \eqref{eq:discrepInSpan0} holds.  

The induction assumption ($\tilde \state^i = \state^i, i=0,\ldots,n$) and
Eq.~\eqref{eq:discrepInSpan0} ensure that
\begin{equation}\label{eq:fomSolInStationary0}
\podstate^T\left(\state^{n+1}-\state^{0}\right)\in
\stationaryTs{n}.
\end{equation}
This can be derived by setting  $y =
\podstate^T\left(\state^{n+1}-\state^{0}\right)$ and writing the objective
function:
 \begin{align} 
f^n(\podstate^T\left(\state^{n+1}-\state^{0}\right))&=\frac{1}{2}\|\tilde
R^n(\state^{0}+\podstate
\podstate^T\left(\state^{n+1}-\state^{0}\right))\|_2^2\\
\label{eq:resZeroTilde0}&=\frac{1}{2}\|\tilde
R^n(\state^{n+1})\|_2^2\\
&=\frac{1}{2}\|
\label{eq:resZero0}R^n(\state^{n+1})\|_2^2\\
\label{eq:resZero0Final}&=0.
  \end{align} 
Eq.\ \eqref{eq:resZeroTilde0} is due to Eq.\ \eqref{eq:discrepInSpan0} and the
orthogonality of the POD basis. Eq.\ \eqref{eq:resZero0} arises from
the equalities
\begin{equation}
\tilde R^n(w) = S^n(w,\tilde \state^n,\ldots,\tilde \state^1,\initialCond) =
S^n(\state, \state^n,\ldots, \state^1,\initialCond) = R^n(\state),
\end{equation}
which hold due to the induction assumption.
Finally, Eq.\
\eqref{eq:resZero0Final} holds because the full-order solution
leads to a zero residual: $R^n(\state^{n+1})=0$. Because $f^n(y) \geq
0$ $\forall y$, Eq.\ \eqref{eq:resZero0Final} implies that
$\podstate^T\left(\state^{n+1}-\initialCond\right)$ is a local minimizer of $f^n$,
so Eq.\ \eqref{eq:fomSolInStationary0} holds.

Assumption \ref{ass:oneStationary}, Eq.\ \eqref{eq:converge1}, and Eq.\
\eqref{eq:fomSolInStationary0} together imply
\begin{equation} \label{eq:deltaSame0}
\deltaStatecoords^{n+1} = \podstate^T\left(\state^{n+1}-\initialCond\right).
\end{equation} 
Substituting Eq.\ \eqref{eq:deltaSame0} into Eq.\ \eqref{eq:GNupdate} 
yields
\begin{align} \label{eq:almostDone}
\tilde\state^{n+1} = \initialCond + \podstate
\podstate^T\left(\state^{n+1}-\initialCond\right).
\end{align} 
Eq.\ \eqref{eq:almostDone} along with Eq.\ \eqref{eq:discrepInSpan0} and the
orthogonality of the POD basis provides the result:
\begin{equation} 
\boxed{
\tilde\state^{n+1}(\paramtrain) = \state^{n+1}(\paramtrain),\quad
n=0,\ldots,\nt.}
\end{equation} 
\end{proof}
\section{Further discussion of the various snapshot-collection procedures}
\label{sec:SCP}

If $\podjac = \podres$, then $A=B$ and $B\restrict{\jk\podstate} = \displaystyle{\frac{\partial [B\restrict{R(\state^{(0)} + \podstate y)]}}{\partial y}}$. As a result, 
the GNAT iterations (\ref{eq:newtonRed1approx})--(\ref{eq:newtonRed2approx}) are in this case equivalent to the Gauss--Newton iterations for solving
\begin{equation} 
\underset{\bar \state\in
\state^{(0)}+\mathcal W}{\mathrm{minimize}}\  \|B\restrict{R(\bar
\state)}\|_2.
  \end{equation} 
Because $\|B\restrict{R}\|_2 = \|\podres B\restrict{R}\|_2$ when $\podres^T\podres = I$ and the gappy POD approximation of $R$ is $\tilde R = \podres B\restrict{R}$, 
the GNAT iterations are also equivalent to the Gauss--Newton iterations for solving
 \begin{equation} 
\underset{\bar \state\in
\state^{(0)}+\mathcal W}{\mathrm{minimize}}\  \|\tilde R(\bar \state)\|_2.
  \end{equation} 
Therefore, when $\podjac=\podres$ and $\podres$ has orthonormal columns, GNAT
inherits the convergence properties of the Gauss--Newton method. This is the
rationale behind both procedure 0 and procedure 1 outlined in Section \ref{sec:gapConsistency}.

On the other
hand, procedure 2 and procedure 3 use different bases $\podres$ and $\podjac$.
For this reason, the GNAT iterations
(\ref{eq:newtonRed1approx})--(\ref{eq:newtonRed2approx}) cannot be associated
with Gauss--Newton iterations for nonlinear residual minimization.
Furthermore, choosing $\podjac\neq \podres$ causes the least-squares problem
(\ref{eq:newtonRed1approx}) to try to `match' quantities that lie in different
subspaces. For these reasons, procedure 2 and procedure 3 may lack robustness
and experience convergence difficulties as reported in Ref.~\citep{carlbergThesis}. 
\section{Error bounds for the solution computed by a discrete nonlinear model
reduction method}
\label{app:EB}
This section proves the error bound~(\ref{eq:boundBackEuler}) presented in
Section~\ref{sec:errBackEuler}.  For the sake of notational simplicity, the
derivation presented here considers the approximation error arising from a
given set of inputs and therefore omits $\params$ from the arguments of the
nonlinear functions. Rewriting the residual~\eqref{eq:backEulerResidual}
in this fashion leads to
\begin{gather}\label{eq:FOMres}
R^n(\statenp) = \statenp-\staten-\Delta t F(\statenp,\tnp).
\end{gather}
Similarly, the residual at the the $n$-th time step arising from any sequence
of approximate solutions $\stateApproxn$, $n=0,\ldots,\nt$, e.g., generated by
a discrete nonlinear ROM, for the same input parameters can be written as
\begin{gather}\label{eq:ROMres}
\Rapprox^n(\stateApproxnp) = \stateApproxnp-
\stateApproxn-\Delta t F(\stateApproxnp,\tnp).
\end{gather}

Subtracting \eqref{eq:ROMres} from \eqref{eq:FOMres} yields
\begin{equation} 
R^n(\statenp) -  \Rapprox^n(\stateApproxnp)= \statenp-\staten-\Delta
t F(\statenp,\tnp) -  \stateApproxnp+ \stateApproxn+\Delta t
F(\stateApproxnp,\tnp).
 \end{equation} 
The above expression can be re-arranged as
 \begin{equation}\label{eq:resAlmostDone2} 
\statenp -  \stateApproxnp-\Delta t F(\statenp,\tnp) +\Delta t
F(\stateApproxnp,\tnp)=R^n(\statenp) -  \Rapprox^n(\stateApproxnp) +\staten-
\stateApproxn.
 \end{equation} 
Introducing $f:(x,t)\mapsto  x - \Delta t F(x,t)$ and the inverse Lipschitz
constant\footnote{Note that $\varepsilon = \frac{1}{\Lgn}$ in
Eq.~\eqref{eq:firstLipschitz}.}
 \begin{equation} 
 \Lgn \equiv \sup_{x\neq y}\frac{\|x-y\|}{\|f(x,\tnp)-f(y,\tnp)\|}
 \end{equation} 
allows Eq.~(\ref{eq:resAlmostDone2}) to be transformed into the following
bound on the \emph{local} approximation error:
 \begin{equation} \label{eq:lgnBound}
\|\statenp-\stateApproxnp\|\leq\Lgn \left(\epsilon_\mathrm{Newton} + \|
\Rapprox^n(\stateApproxnp)\|  +\|\staten- \stateApproxn\|\right).
 \end{equation} 

Assuming
that the initial approximation error is zero\footnote{This is valid for both the Petrov--Galerkin and GNAT ROMs as they employ the same initial
condition as the high-dimensional model (See Algorithm \ref{alg:onlinegnat}).} ($\stateApprox^0 = \state^0$), the inequality~(\ref{eq:lgnBound}) leads to the following result
 \begin{equation} 
\|\staten-\stateApproxn\| \leq \sum_{k=1}^{n}a^{k}b_{n-k},
 \end{equation} 
where $a = \Lg\equiv \sup_{n\in\{1,\ldots,\nt\}}\Lgn$ and 
\begin{equation}\label{eq:andef}
b_n\equiv \epsilon_\mathrm{Newton} + \| \Rapprox^n(\stateApproxnp)\|.
\end{equation}

From the triangle inequality, it follows that $\| \Rapprox^n(\stateApproxnp)\|\leq \|\proj\Rapprox^n(\stateApproxnp)\| + \|\left(I-\proj\right)\Rapprox^n(\stateApproxnp)\|$
for any $\proj$. Hence, another bound for the approximation error is
 \begin{equation} \label{eq:bound2}
\|\staten-\stateApproxn\| \leq \sum_{k=1}^{n}a^{k}c_{n-k},
 \end{equation} 
where
\begin{equation}\label{eq:Cndef}
c_n\equiv \epsilon_\mathrm{Newton} + \| \proj\Rapprox^n(\stateApproxnp)\| +
\|(I-\proj)\Rapprox^n(\stateApproxnp)\|
\end{equation}
and $c_n\geq b_n$.
The bound \eqref{eq:bound2} is particularly interesting for the case where
$\proj = \podres\hatpodrespseudo\restrict{}$ represents the gappy POD
operator because
$\|\proj\Rapprox^n(\stateApproxnp)\|=\|\hatpodrespseudo\restrict{\Rapprox^n(\stateApproxnp)}\|$
is readily computable by GNAT.

In \ref{app:gappyPODbound}, it is shown that an upper bound for the gappy POD approximation error is
\begin{equation} \label{eq:gappyPODbound}
\|\left(I - \proj\right) \Rapprox^n(\stateApproxnp)\|\leq \|\mathsf R^{-1}\|\|(I - \mathbb P) \Rapprox^n(\stateApproxnp)\|, 
 \end{equation} 
where $\mathbb P=\podres\podres^T$ defines the orthogonal projector onto
$\Range{\podres}$, and $\restrict{\podres} = \mathsf Q\mathsf R$ is
the thin QR factorization of $\restrict{\podres}$ with $\mathsf Q\in
\RR{\nin\times \nR}$ and $\mathsf R \in \RR{\nR\times\nR}$. Therefore from \eqref{eq:gappyPODbound}, it follows that yet another error bound for the approximation error is
 \begin{equation}\label{eq:totalBound1} 
\|\staten-\stateApproxn\| \leq \sum_{k=1}^{n}a^{k} d_{n-k},
 \end{equation} 
where
 \begin{equation} 
 d_n \equiv \epsilon_\mathrm{Newton} + \|\proj \Rapprox^n(\stateApproxnp)\| +
 \|\mathsf R^{-1}\|\|\left(I - \mathbb P\right) \Rapprox^n(\stateApproxnp)\|.
 \end{equation} 
Because $b_n \leq c_n\leq d_n$, it follows that a global bound for the
approximation error at the $n$-th time step with $1\leq n\leq \nt$ is given by
 \begin{equation} 
\boxed{\|\staten-\stateApproxn\| \leq
\sum_{k=1}^{n}a^{k}b_{n-k}\leq\sum_{k=1}^{n}a^{k}c_{n-k}\leq\sum_{k=1}^{n}a^{k}d_{n-k}.}
 \end{equation}

\section{Error bound for the gappy POD approximation}\label{app:gappyPODbound}
This section establishes a bound for the error associated with the gappy POD approximation
of a vector $g\in\RR{N}$ using a POD basis $\Phi_f\in\RR{N\times n_f}$ and a
set of $\nin\geq n_f$ sample indices $\mathcal I$ that define the sample matrix $Z$
(see Section \ref{sec:sysApproxOptimality} for these definitions).\footnote{This
development follows closely the proof of Lemma 3.2 in
Ref.~\citep{chaturantabut2010journal}.} 

Define 
$g^*\equiv \mathbb Pg$ with $\mathbb P\equiv \Phi_f\Phi_f^T$ as the orthogonal
(i.e., optimal) projection of 
$g$ onto
$\Range{\Phi_f}$. Also, define the difference between $g$ and its
orthogonal projection onto $\Range{\Phi_f}$ as $e\equiv g-g^*$.
Finally, define the gappy POD projection matrix $\proj \equiv \Phi_g\mathsf R^{-1}\mathsf Q^T\restrict{}$, where
$\restrict{\Phi_f} = \mathsf Q\mathsf R$ is
the thin QR factorization of $\restrict{\Phi_f}$ with $\mathsf Q\in
\RR{\nin\times n_f}$ and $\mathsf R \in \RR{n_f\times n_f}$.

The gappy POD approximation of $g$ is $\proj  g = \proj \left(e+g^*\right)$. It can also be written as
 \begin{equation} \label{eq:gappyErrorEquation}
	\proj g= \proj e + g^* 
  \end{equation} 
because $\proj g^* = g^*$, as $g^*\in\Range{\Phi_f}$. Substituting $g^* =
g-e$ into Eq.\ \eqref{eq:gappyErrorEquation} yields
$(I-\proj)g = (I-\proj )e$. Therefore,
\begin{equation}\label{eq:gBound1}
\|(I-\proj)g \|_2 = \|(I-\proj )e\|_2\leq \|(I-\proj )\|_2\|e\|_2 .
\end{equation}
Because $\|I-\proj \|_2 = \|\proj \|_2$ for any projection matrix $\proj
$ not equal to 0 or $I$, it follows that
\begin{equation}\label{eq:IP}
\|I-\proj \|_2 = \|\proj \|_2 =
\|\Phi_g\mathsf R ^{-1}\mathsf Q^T\restrict\|_2=\|\mathsf R^{-1}\|_2.
\end{equation}
The last equality follows from the fact that $\Phi_g$, $\restrict{}^T$, and
$\mathsf Q$ have orthonormal columns. Substituting \eqref{eq:IP} in \eqref{eq:gBound1} gives the result
\begin{equation}
\boxed{\|(I-\proj) g\|_2\leq
\|\mathsf R^{-1}\|_2\Big\|\left(I-\mathbb P\right)g\Big\|_2.}
\end{equation}
\section*{Acknowledgments}

Most of this work was completed while the first and third authors were at
Stanford University. The authors thank Phil Avery and Charbel Bou-Mosleh for
their contributions to the parallel implementation of GNAT in AERO-F, and Matthew Zahr
for his contribution to the Burgers' equation example.  All authors acknowledge
partial support by the Motor Sports Division of the Toyota Motor Corporation
under Agreement Number 48737, and partial support by the Army Research
Laboratory through the Army High Performance Computing Research Center under
Cooperative Agreement W911NF-07-2-0027.  The first author also acknowledges
partial support by the National Science Foundation Graduate Fellowship, the
National Defense Science and Engineering Graduate Fellowship, and an
appointment to the Sandia National Laboratories Truman Fellowship in
National Security Science and Engineering. The Truman Fellowship is
sponsored by Sandia Corporation (a wholly owned subsidiary of Lockheed
Martin Corporation) as Operator of Sandia National Laboratories under its
U.S.\  Department of Energy Contract No.\ DE-AC04-94AL85000.  The content of
this publication does not necessarily reflect the position or policy of any
of these institutions, and no official endorsement should be inferred. 
\bibliographystyle{model1-num-names}
\bibliography{references}
\end{document}